\newcommand{\bcen}{\begin{center}}
\newcommand{\ecen}{\end{center}}
\newtheorem{theorem}{Theorem}[section]
\newtheorem{lemma}[theorem]{Lemma}
\newtheorem{corollary}[theorem]{Corollary}
\newtheorem{remark}[theorem]{Remark}
\begin{document}
\setcounter{page}{1}
\title{On the Ashbaugh-Benguria type conjecture about lower-order Neumann eigenvalues of the Witten-Laplacian}
\author{Ruifeng Chen,~~ Jing Mao$^{\ast}$}

\date{}
\protect \footnotetext{\!\!\!\!\!\!\!\!\!\!\!\!{~$\ast$ Corresponding author\\
MSC 2020:
35P15, 49Jxx, 35J15.}\\
{Key Words: Witten-Laplacian, Neumann eigenvalues, Laplacian, the
free membrane problem, isoperimetric inequalities. } }
\maketitle ~~~\\[-15mm]

\begin{center}
{\footnotesize Faculty of Mathematics and Statistics,\\
 Key Laboratory of Applied
Mathematics of Hubei Province, \\
Hubei University, Wuhan 430062, China\\
Emails: gchenruifeng@163.com (R. F. Chen), jiner120@163.com (J. Mao)
 }
\end{center}


\begin{abstract}
An isoperimetric inequality for lower order nonzero Neumann
eigenvalues of the Witten-Laplacian on bounded domains in a
Euclidean space or a hyperbolic space has been proven in this paper.
About this conclusion, we would like to point out two things:
\begin{itemize}

\item It strengthens the well-known
Szeg\H{o}-Weinberger inequality for nonzero Neumann eigenvalues of
the classical free membrane problem given in [J. Rational Mech.
Anal. {\bf 3} (1954) 343--356] and [J. Rational Mech. Anal. {\bf 5}
(1956) 633--636];

\item Recently, Xia-Wang [Math. Ann. {\bf 385} (2023)
863--879] gave a very important progress to the celebrated
conjecture of M. S. Ashbaugh and R. D. Benguria proposed in [SIAM J.
Math. Anal. {\bf 24} (1993) 557--570]. It is easy to see that our
conclusion here covers Xia-Wang's this progress as a special case.
\end{itemize}
In this paper, we have also proposed two open problems which can be
seen as a generalization of Ashbaugh-Benguria's conjecture mentioned
above.
 \end{abstract}


\section{Introduction}
\renewcommand{\thesection}{\arabic{section}}
\renewcommand{\theequation}{\thesection.\arabic{equation}}
\setcounter{equation}{0}

Let $(M^{n},\langle\cdot,\cdot\rangle)$ be an $n$-dimensional
($n\geq2$) complete Riemannian manifold with the metric
$g:=\langle\cdot,\cdot\rangle$. Let $\Omega\subseteq M^{n}$ be a
 domain in $M^n$, and $\phi\in C^{\infty}(M^n)$ be a
smooth\footnote{~In fact, one might see that $\phi\in C^{2}$ is
suitable to derive our main conclusions in this paper. However, in
order to avoid a little bit boring discussion on the regularity of
$\phi$ and following the assumption on conformal factor $e^{-\phi}$
for the notion of \emph{smooth metric measure spaces} in many
literatures (including of course those cited in this paper), without
specification, we wish to assume that $\phi$ is smooth on the domain
$\Omega$. } real-valued function defined on $M^n$. In this setting,
on $\Omega$, the following elliptic operator
\begin{eqnarray*}
\Delta_{\phi}:=\Delta-\langle\nabla\phi,\nabla\cdot\rangle
\end{eqnarray*}
can be well-defined, where $\nabla$, $\Delta$ are the gradient and
the Laplace operators on $M^{n}$, respectively. The operator
$\Delta_{\phi}$ w.r.t. the metric $g$ is called the
\emph{Witten-Laplacian} (also called the \emph{drifting Laplacian}
or the \emph{weighted Laplacian}). The $K$-dimensional
Bakry-\'{E}mery Ricci curvature $\mathrm{Ric}^{K}_{\phi}$ on $M^{n}$
can be defined as follows
 \begin{eqnarray*}
\mathrm{Ric}^{K}_{\phi}:=\mathrm{Ric}+\mathrm{Hess}\phi-\frac{d\phi\otimes
d\phi}{K-n-1},
 \end{eqnarray*}
where $\mathrm{Ric}$ denotes the Ricci curvature tensor on $M^{n}$,
and $\mathrm{Hess}$ is the Hessian operator on $M^{n}$ associated to
the metric $g$. Here $K>n+1$ or $K=n+1$ if $\phi$ is a constant
function. When $K=\infty$, the so-called $\infty$-dimensional
Bakry-\'{E}mery Ricci curvature $\mathrm{Ric}_{\phi}$ (simply,
\emph{Bakry-\'{E}mery Ricci curvature} or \emph{weighted Ricci
curvature}) can be defined as follows
\begin{eqnarray*}
 \mathrm{Ric}_{\phi}:=\mathrm{Ric}+\mathrm{Hess}\phi.
\end{eqnarray*}
These notions were introduced by D. Bakry and M. \'{E}mery in
\cite{BE}. Many interesting results (under suitable assumptions on
the Bakry-\'{E}mery Ricci curvature) have been obtained, and we wish
to mention briefly several ones:
\begin{itemize}

\item For Riemannian manifolds\footnote{~Without
specifications, generally, in this paper same symbols have the same
meanings.} $(M^{n},g)$ endowed with a weighted measure
$d\eta:=e^{-\phi}dv$, where $dv$ denotes the Riemannian volume
element (or Riemannian density) w.r.t. the metric $g$, Wei and Wylie
\cite{WW} proved mean curvature and volume comparison results when
the Bakry-\'{E}mery Ricci curvature $\mathrm{Ric}_{\phi}$ is bounded
from below and $\phi$ or $|\nabla\phi|$ is bounded, improving the
classical ones (i.e., when $\phi$ is constant). As described by J.
Mao (the corresponding author here) in \cite[pp. 31-32]{JM1}, one
might have an illusion that smooth metric measure spaces are not
necessary to be studied since they are simply obtained from
corresponding Riemannian manifolds by adding a conformal factor to
the Riemannian measure. However, they do have many differences. For
instance, when $\mathrm{Ric}_{\phi}$ is bounded from below, the
Myer's theorem, Bishop-Gromov's volume comparison, Cheeger-Gromoll's
splitting theorem and Abresch-Gromoll's excess estimate cannot hold
as in the Riemannian case. Moreover, in order to let readers have a
deep impression and a nice comprehension on those differences, Mao
\cite[page 32]{JM1} has also repeated briefly an interesting example
(given in \cite[Example 2.1]{WW}) to make an explanation therin.
More precisely, for the metric measure space
$(\mathbb{R}^{n},g_{\mathbb{R}^{n}},e^{-\phi}dv_{\mathbb{R}^{n}})$,
where $g_{\mathbb{R}^{n}}$ is the usual Euclidean metric of the
Euclidean $n$-space $\mathbb{R}^n$, and $dv_{\mathbb{R}^{n}}$
denotes the Euclidean volume density related to
$g_{\mathbb{R}^{n}}$, if $\phi(x)=\frac{\lambda}{2}|x|^2$ for
$x\in\mathbb{R}^n$, then $\mathrm{Hess}=\lambda g_{\mathbb{R}^n}$
and $\mathrm{Ric}_{\phi}=\lambda g_{\mathbb{R}^n}$.  Therefore, from
this example we know that unlike in the case of Ricci curvature
bounded from below uniformly by some positive constant, a metric
measure space is not necessarily compact provided
$\mathrm{Ric}_{\phi}\geq\lambda$ and $\lambda>0$. Hence, it is
meaningful to study geometric problems in smooth metric measure
spaces.

\item Perelman's $\mathcal{W}$-entropy formula for the heat
equation associated with the Witten Laplacian on complete Riemannian
manifolds via the Bakry-\'{E}mery Ricci curvature tensor has been
investigated by Li \cite{XDL}. In fact, under the assumption that
the $m$-dimensional Bakry-\'{E}mery Ricci curvature is bounded from
below, Li \cite[Theorem 2.3]{XDL} obtained an analogue of Perelman's
entropy formula for the $\mathcal{W}$-entropy of the heat kernel of
the Witten Laplacian on complete Riemannian manifolds with some
natural geometric conditions. In particular, by this fact, he proved
a monotonicity theorem and a rigidity theorem for the
$\mathcal{W}$-entropy on complete Riemannian manifolds with
\emph{nonnegative} $m$-dimensional Bakry-\'{E}mery Ricci curvature.

\item Mao and his collaborators \cite[Theorems 4.1 and 4.4, Corollary 4.2]{DMWW} investigated the
buckling problem of the drifting Laplacian, and \textbf{firstly}
obtained some universal inequalities for eigenvalues of the same
problem on bounded connected domains in the Gaussian shrinking
solitons
$$\left(\mathbb{R}^{n},g_{\mathbb{R}^{n}},e^{-\frac{|x|^2}{4}}dv_{\mathbb{R}^{n}},\frac{1}{2}\right)$$
 and
some general product solitons of the type
$$\left(\Sigma\times\mathbb{R},g,e^{-\frac{\kappa t^2}{2}}dv,\kappa\right),$$
where $\Sigma$ is an Einstein manifold with constant Ricci curvature
$\kappa$, and $t\in\mathbb{R}$ is the parameter defined along the
line $\{x\}\times\mathbb{R}$, $x\in\Sigma$. Besides, as interpreted
in \cite[Remark 4.3]{DMWW}, for a self-shrinker, if the weighted
function $\phi$ was chosen to be $\phi=\frac{|x|^2}{4}$, then the
drifting Laplacian considered in \cite{DMWW} degenerates into the
operator $\mathcal{L}:=\Delta-\frac{1}{2}\langle
x,\nabla(\cdot)\rangle$ which was introduced by Colding-Minicozzi II
\cite{CMII} to study self-shrinker hypersurfaces. For the Dirichlet
eigenvalue problem of the operator $\mathcal{L}$, Cheng-Peng
\cite{CP} have obtained some universal inequalities. From this
viewpoint, \cite[Theorem 4.1 and Corollary 4.2]{DMWW} can be
regarded as conclusions for the buckling problem of the operator
$\mathcal{L}$.

\end{itemize}
Except \cite{DMWW,JM1}, Mao also has some other interesting works
related to the Witten-Laplacian -- see, e.g.,
\cite{DM1,LMWZ,JM2,MTZ,YWMD}.

Using the conformal measure $d\eta=e^{-\phi}dv$, the notion,
\emph{smooth metric measure space} $(M^{n},g,d\eta)$, can be
well-defined, which is actually the given Riemannian manifold
$(M^{n},g)$ equipped with the weighted measure $d\eta$. Smooth
metric measure space $(M^{n},g,d\eta)$ sometimes is also called the
\emph{weighted measure space}. For the smooth metric measure space
$(M^{n},g,d\eta)$, one can define a notion, weighted volume (or
$\phi$-volume), as follows:
\begin{eqnarray*}
|M^{n}|_{\phi}:=\int_{M^{n}}d\eta=\int_{M^n}e^{-\phi}dv.
\end{eqnarray*}
On a compact smooth metric measure space
$(\Omega,\langle\cdot,\cdot\rangle,d\eta)$, one can naturally
consider the Neumann eigenvalue problem of the Witten-Laplacian
$\Delta_{\phi}$ as follows
\begin{eqnarray} \label{eigen-1}
\left\{
\begin{array}{ll}
\Delta_{\phi} u+\mu u=0\qquad & \mathrm{in}~\Omega\subset M^{n}, \\[0.5mm]
\frac{\partial u}{\partial\vec{\nu}}=0 \qquad &
\mathrm{on}~{\partial\Omega},
\end{array}
\right.
\end{eqnarray}
where $\vec{\nu}$ is the unit normal vector along the
smooth\footnote{~The smoothness assumption for the regularity of the
boundary $\partial\Omega$ is strong to consider the eigenvalue
problem (\ref{eigen-1}) of the Witten-Laplacian. In fact, a weaker
regularity assumption that $\partial\Omega$ is Lipschitz continuous
can also assure the validity about the description of the discrete
spectrum of the eigenvalue problem (\ref{eigen-1}). However, the
Lipschitz continuous assumption might not be enough to consider some
other geometric problems involved Neumann eigenvalues of
(\ref{eigen-1}). Therefore, to avoid excessive focus on the
regularity of the boundary $\partial\Omega$ -- which is not central
to the topic of this paper -- we assume, unless otherwise stated,
that $\partial\Omega$ is smooth. This setting leads to the situation
that certain conclusions of this paper may still hold even under a
weaker regularity assumption for the boundary $\partial\Omega$,
readers who are interested in this situation could try to seek the
weakest regularity. } boundary $\partial\Omega$. The eigenvalue
problem (\ref{eigen-1}) can also be called the \emph{free membrane
problem} of the operator $\Delta_{\phi}$. It is not hard to check
that the operator $\Delta_{\phi}$ in (\ref{eigen-1}) is
\textbf{self-adjoint} w.r.t. the following inner product
\begin{eqnarray*}
(f_{1},f_{2})_{\phi}:=\int_{\Omega}f_{1}f_{2}d\eta=\int_{\Omega}f_{1}f_{2}e^{-\phi}dv,
\end{eqnarray*}
with $f_{1},f_{2}\in W^{1,2}_{\phi}(\Omega)$,  where $
W^{1,2}_{\phi}(\Omega)$ stands for the Sobolev space w.r.t. the
weighted measure $d\eta$, i.e. the completion of the set of smooth
functions $C^{\infty}(\Omega)$ under the following Sobolev norm
\begin{eqnarray*}
\|f\|_{1,2}^{\phi}:=\left(\int_{\Omega}f^{2}d\eta+\int_{\Omega}|\nabla
f|^{2}d\eta\right)^{1/2}.
\end{eqnarray*}
Then using similar arguments to those of the classical free membrane
problem of the Laplacian (i.e., the discussions about the existence
of discrete spectrum, Rayleigh's theorem, Max-min theorem, etc.
These discussions are standard, and for details, please see for
instance \cite{IC}), it is not hard to know:
\begin{itemize}
\item The self-adjoint elliptic operator $-\Delta_{\phi}$ in
(\ref{eigen-1}) \emph{only} has discrete spectrum, and all the
elements (i.e., eigenvalues) in this discrete spectrum can be listed
non-decreasingly as follows
\begin{eqnarray} \label{sequence-1}
0=\mu_{0,\phi}(\Omega)<\mu_{1,\phi}(\Omega)\leq\mu_{2,\phi}(\Omega)\leq\cdots\uparrow+\infty.
\end{eqnarray}
For each eigenvalue $\mu_{i,\phi}(\Omega)$, $i=0,1,2,\cdots$, all
the possible nontrivial functions $u$ satisfying (\ref{eigen-1}) are
called eigenfunctions belonging to $\mu_{i,\phi}(\Omega)$. Since the
first equation in  (\ref{eigen-1}) is linear, the space of
$\mu_{i,\phi}(\Omega)$'s eigenfunctions should be a vector space.
This vector space of $\mu_{i,\phi}(\Omega)$ is called eigenspace.
Each eigenspace has finite dimension, and usually the dimension of
each eigenspace is called multiplicity of the eigenvalue. It is easy
to know that eigenfunctions of the first eigenvalue
$\mu_{0,\phi}(\Omega)=0$ are nonzero constant functions, and
correspondingly, the eigenspace of $\mu_{0,\phi}(\Omega)=0$ has
dimension $1$. Eigenvalues in the sequence (\ref{sequence-1}) are
repeated according to its multiplicity. By applying the standard
variational principles, one can obtain that the $k$-th nonzero
Neumann eigenvalue $\mu_{k,\phi}(\Omega)$ can be characterized as
follows
 \begin{eqnarray}  \label{chr-1}
 \mu_{k,\phi}(\Omega)=\inf\left\{\frac{\int_{\Omega}|\nabla f|^{2}e^{-\phi}dv}{\int_{\Omega}f^{2}e^{-\phi}dv}
 \Bigg{|}f\in W^{1,2}_{\phi}(\Omega),f\neq0,\int_{\Omega}ff_{i}e^{-\phi}dv=0\right\},
 \end{eqnarray}
where $f_{i}$, $i=0,1,2,\cdots,k-1$, denotes an eigenfunction of
$\mu_{i,\phi}(\Omega)$. Specially, the first nonzero Neumann
eigenvalue $\mu_{1,\phi}(\Omega)$ of the eigenvalue problem
(\ref{eigen-1}) satisfies
\begin{eqnarray}  \label{chr-2}
 \mu_{1,\phi}(\Omega)=\inf\left\{\frac{\int_{\Omega}|\nabla f|^{2}d\eta}{\int_{\Omega}f^{2}d\eta}
 \Bigg{|}f\in W^{1,2}_{\phi}(\Omega),f\neq0,\int_{\Omega}fd\eta=0\right\}.
 \end{eqnarray}
For convenience and without confusion, in the sequel, except
specification we will write $\mu_{i,\phi}(\Omega)$ as $\mu_{i,\phi}$
directly. This convention would be also used when we meet with other
possible eigenvalue problems.
\end{itemize}
 In this paper, we focus on the Neumann eigenvalue problem
(\ref{eigen-1}) of the Witten-Laplacian and can prove an
isoperimetric inequality for the sums of the reciprocals of the
first $(n-1)$ nonzero Neumann eigenvalues of the Witten-Laplacian on
bounded domains in $\mathbb{R}^n$ or a hyperbolic space. However, in
order to state our conclusions clearly, we need to impose an
assumption on the function $\phi$ as follows:
\begin{itemize}
\item (\textbf{Property I}) $\phi$ is a function
of the Riemannian distance parameter $t:=d(o,\cdot)$ for some point
$o\in\mathrm{hull}(\Omega)$, and $\phi$ is also a non-increasing
convex function defined on $[0,\infty)$.
\end{itemize}
Here $\mathrm{hull}(\Omega)$ stands for the convex hull of the
domain $\Omega$. Clearly, if a given open Riemannian $n$-manifold
$(M^{n},g)$ was endowed with the weighted density $e^{-\phi}dv$ with
$\phi$ satisfying \textbf{Property I}, then $\phi$ would be a
\emph{\textbf{radial}} function defined on $M^{n}$ w.r.t. the radial
distance $t$, $t\in[0,\infty)$. Especially, when the given open
$n$-manifold is chosen to be $\mathbb{R}^{n}$ or $\mathbb{H}^{n}$
(i.e., the $n$-dimensional hyperbolic space of sectional curvature
$-1$), we additionally require that $o$ is the origin  of
$\mathbb{R}^{n}$ or $\mathbb{H}^{n}$.

\begin{theorem} \label{theo-1}
 Assume that the function $\phi$ satisfies \textbf{Property I}. Let $\Omega$ be a bounded domain with smooth boundary in
 $\mathbb{R}^n$, and let $B_{R}(o)$ be a ball of radius $R$ and centered at the
 origin $o$
 of
 $\mathbb{R}^{n}$ such that $|\Omega|_{\phi}=|B_{R}(o)|_{\phi}$,
 i.e. $\int_{\Omega}d\eta=\int_{B_{R}(o)}d\eta$.  Then
  \begin{eqnarray} \label{II-1}
\frac{1}{\mu_{1,\phi}(\Omega)}+\frac{1}{\mu_{2,\phi}(\Omega)}+\cdots+\frac{1}{\mu_{n-1,\phi}(\Omega)}\geq\frac{n-1}{\mu_{1,\phi}(B_{R}(o))}.
\end{eqnarray}
The equality case holds if and only if $\Omega$ is the ball
$B_{R}(o)$.
\end{theorem}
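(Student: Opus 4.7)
The plan is to follow the Szeg\H{o}--Weinberger scheme, in the refined form due to Ashbaugh--Benguria and Wang--Xia, adapted to the weighted setting by exploiting \textbf{Property I} on $\phi$. First I would set up the radial model on the ball: separation of variables and the radiality of $\phi$ on $B_{R}(o)$ force the first $n$ nonzero Neumann eigenvalues of $\Delta_{\phi}$ on $B_{R}(o)$ to coincide with $\mu_{1,\phi}(B_{R}(o))$, with eigenspace spanned by the $n$ coordinate-type functions $u(t)x_{i}/t$, where $t=|x|$ and $u\colon[0,R]\to\mathbb{R}$ solves the weighted radial ODE
\begin{equation*}
u''(t)+\Big(\tfrac{n-1}{t}-\phi'(t)\Big)u'(t)-\tfrac{n-1}{t^{2}}u(t)+\mu_{1,\phi}(B_{R}(o))\,u(t)=0,
\end{equation*}
with $u(0)=0$, $u'(R)=0$, and can be taken non-negative and non-decreasing by Sturm--Liouville theory. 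Extending by $G(t):=u(t)$ on $[0,R]$ and $G(t):=u(R)$ on $[R,\infty)$ produces the candidate test fields $\varphi_{i}^{p}(x):=G(|x-p|)(x_{i}-p_{i})/|x-p|$.

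Second, a Brouwer/center-of-mass argument on $\overline{\mathrm{hull}(\Omega)}$ selects $p^{\ast}$ so that $\int_{\Omega}\varphi_{i}^{p^{\ast}}\,d\eta=0$ for every $i$; write $\varphi_{i}=\varphi_{i}^{p^{\ast}}$. Letting $f_{0}\equiv\mathrm{const}$ and $f_{1},f_{2},\ldots$ denote an $L^{2}(\Omega,d\eta)$-orthonormal basis of Neumann eigenfunctions, a QR-type triangularization of the $n\times(n-1)$ matrix $A_{jk}:=\int_{\Omega}\varphi_{j}f_{k}\,d\eta$ produces $Q\in O(n)$ such that the rotated test functions $\psi_{i}:=\sum_{j=1}^{n}Q_{ij}\varphi_{j}$ obey the staircase orthogonalities $\int_{\Omega}\psi_{i}f_{k}\,d\eta=0$ for every $1\le k\le i-1$, $1\le i\le n$; these $n(n-1)/2$ constraints match $\dim O(n)$ exactly. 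Each $\psi_{i}$ is then admissible in the variational characterization \eqref{chr-1} of $\mu_{i,\phi}(\Omega)$, giving
\begin{equation*}
\mu_{i,\phi}(\Omega)\int_{\Omega}\psi_{i}^{2}\,d\eta\le \int_{\Omega}|\nabla\psi_{i}|^{2}\,d\eta,\qquad i=1,\ldots,n.
\end{equation*}

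Finally, I would aggregate these $n$ Rayleigh bounds into the reciprocal inequality, using the pointwise rotational invariants
\begin{equation*}
\sum_{i=1}^{n}\psi_{i}^{2}=G(t)^{2},\qquad \sum_{i=1}^{n}|\nabla\psi_{i}|^{2}=G'(t)^{2}+\tfrac{n-1}{t^{2}}G(t)^{2},
\end{equation*}
combined with a Cauchy--Schwarz/Chebyshev-type manipulation in the spirit of Wang--Xia that isolates the contribution of $\psi_{n}$ and reduces the problem to the scalar weighted radial estimate
\begin{equation*}
\int_{\Omega}\Big(G'(t)^{2}+\tfrac{n-1}{t^{2}}G(t)^{2}\Big)d\eta\le \mu_{1,\phi}(B_{R}(o))\int_{\Omega}G(t)^{2}\,d\eta.
\end{equation*}
This last inequality is proved in Weinberger's classical way: define $F(t):=G'(t)^{2}+\frac{n-1}{t^{2}}G(t)^{2}-\mu_{1,\phi}(B_{R}(o))G(t)^{2}$; using the ODE for $u$ on $[0,R]$ and the constancy of $G$ on $[R,\infty)$, show that $F$ is non-positive and non-increasing (this is precisely where \textbf{Property I} is essential, since $\phi$ non-increasing and convex keeps the drift term $-\phi'u'$ favorably signed); and then apply a $\phi$-weighted equimeasurable rearrangement between $\Omega$ and $B_{R}(o)$, valid because $|\Omega|_{\phi}=|B_{R}(o)|_{\phi}$, to dominate $\int_{\Omega}F\,d\eta$ by $\int_{B_{R}(o)}F\,d\eta\le 0$. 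Tracking equality through the chain of inequalities forces $\Omega=B_{R}(o)$.

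The main obstacle is the aggregation step: passing from the $n$ individual Rayleigh bounds to a sharp lower estimate for the sum of the first $n-1$ reciprocals is delicate, since a naive reciprocal Cauchy--Schwarz points the wrong way. One must carefully exploit the precise interplay between the rotational invariance and the staircase construction so that the unused contribution coming from $\psi_{n}$ does not spoil the bound; this is exactly where the gap lies between the $(n-1)$-term inequality proved here and the still-open full $n$-term Ashbaugh--Benguria conjecture. A secondary delicate point is the sign analysis of $F$ in the radial comparison, where \textbf{Property I} on $\phi$ is used essentially to tame the Witten-drift $\phi'u'$.
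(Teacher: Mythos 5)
Your proposal follows essentially the same route as the paper's proof: the radial model on $B_{R}(o)$ via Lemma 2.1 with the extension $f$ (your $G$), the Brouwer/center-of-mass choice of origin, the Gram--Schmidt/QR rotation yielding trial functions with the staircase orthogonality (\ref{Tadd}), the Rayleigh bounds (\ref{2-11}), the majorization step $\sum_{i}\mu_{i,\phi}^{-1}\lvert\overline{\nabla}(x_i/t)\rvert^{2}\le\sum_{i=1}^{n-1}\mu_{i,\phi}^{-1}$ together with $\mu_{n,\phi}^{-1}\le(n-1)^{-1}\sum_{i=1}^{n-1}\mu_{i,\phi}^{-1}$ (which is exactly the ``isolate $\psi_n$'' step you gesture at, imported from \cite{XW} as (\ref{2-13})), and the $\phi$-weighted rearrangement enabled by \textbf{Property~I} via Lemma \ref{lemma2-3}. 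The only deviation is cosmetic: you bundle the final comparison into a single rearrangement for $F(t)=(G')^{2}+(n-1)G^{2}/t^{2}-\mu_{1,\phi}(B_R(o))G^{2}$, whereas the paper performs three separate rearrangement inequalities on $f^{2}$, $f^{2}/S_{\kappa}^{2}$, and $(f')^{2}$; both work. One small slip: $F$ is \emph{not} non-positive --- indeed $F(0)=n\,G'(0)^{2}>0$ --- but this is harmless, since your argument only needs $F$ non-increasing together with $\int_{B_R(o)}F\,d\eta=0$ (not $\le 0$), after which equimeasurable rearrangement with $\lvert\Omega\rvert_{\phi}=\lvert B_R(o)\rvert_{\phi}$ gives $\int_{\Omega}F\,d\eta\le\int_{B_R(o)}F\,d\eta=0$.
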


By applying the sequence (\ref{sequence-1}), i.e. the monotonicity
of Neumann eigenvalues of the Witten-Laplacian, from (\ref{II-1})
one has:

\begin{corollary} \label{coro-1}
Under the assumptions of Theorem \ref{theo-1}, we have
\begin{eqnarray} \label{II-2}
\mu_{1,\phi}(\Omega)\leq\mu_{1,\phi}(B_{R}(o)),
\end{eqnarray}
with equality holding if and only if $\Omega$ is the ball
$B_{R}(o)$. That is to say, among all bounded domains in
$\mathbb{R}^n$ having the same weighted volume, the ball $B_{R}(o)$
maximizes the first nonzero Neumann eigenvalue of the
Witten-Laplacian, provided the function $\phi$ satisfies
\textbf{Property I}.
\end{corollary}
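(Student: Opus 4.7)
The plan is to derive Corollary \ref{coro-1} as a short arithmetic consequence of Theorem \ref{theo-1}, using only the monotonicity of the Neumann spectrum recorded in (\ref{sequence-1}). The key elementary observation is that the ordering
\[
\mu_{1,\phi}(\Omega) \leq \mu_{2,\phi}(\Omega) \leq \cdots \leq \mu_{n-1,\phi}(\Omega)
\]
implies that each reciprocal $1/\mu_{i,\phi}(\Omega)$ for $2 \leq i \leq n-1$ is bounded above by $1/\mu_{1,\phi}(\Omega)$. Summing these $n-1$ bounds yields
\[
\sum_{i=1}^{n-1} \frac{1}{\mu_{i,\phi}(\Omega)} \leq \frac{n-1}{\mu_{1,\phi}(\Omega)}.
\]

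Next I would feed in the lower bound (\ref{II-1}) from Theorem \ref{theo-1}, which gives the two-sided chain
\[
\frac{n-1}{\mu_{1,\phi}(\Omega)} \geq \sum_{i=1}^{n-1} \frac{1}{\mu_{i,\phi}(\Omega)} \geq \frac{n-1}{\mu_{1,\phi}(B_R(o))}.
\]
Clearing denominators and inverting, this immediately produces the desired isoperimetric bound (\ref{II-2}), namely $\mu_{1,\phi}(\Omega) \leq \mu_{1,\phi}(B_R(o))$.

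For the equality characterization, one direction is trivial: if $\Omega = B_R(o)$, then of course equality holds in (\ref{II-2}). For the converse, suppose $\mu_{1,\phi}(\Omega) = \mu_{1,\phi}(B_R(o))$. Then both inequalities in the displayed chain must collapse to equalities; in particular, equality is realised in (\ref{II-1}), and the equality assertion of Theorem \ref{theo-1} then forces $\Omega = B_R(o)$. Since Theorem \ref{theo-1} is taken as given, there is essentially no technical obstacle in this argument; the only delicate point worth emphasising is that the rigidity statement of the corollary is not proved directly but is imported from the rigidity statement of Theorem \ref{theo-1} through the monotonicity argument above. The real content of the corollary is thus the clean reduction that exhibits the Szeg\H{o}--Weinberger type maximisation property of the weighted ball as a by-product of the stronger inequality on sums of reciprocals.
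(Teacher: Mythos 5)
Your argument matches the paper's own derivation: the corollary is obtained from (\ref{II-1}) by the monotonicity of the Neumann spectrum (\ref{sequence-1}), which gives $\sum_{i=1}^{n-1}1/\mu_{i,\phi}(\Omega)\leq (n-1)/\mu_{1,\phi}(\Omega)$, and the equality case is inherited from Theorem \ref{theo-1}. This is correct and is essentially the same route the paper takes.
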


\begin{remark}
\rm{ The spectral isoperimetric inequality (\ref{II-2}) in Corollary
\ref{coro-1} has already been proven by the authors in \cite{CM} by
suitably constructing the trial function. However, we still wish to
list it here to show the close relation between (\ref{II-1}) and
(\ref{II-2}), and show of course the significance of the spectral
isoperimetric inequality (\ref{II-1}) as well.  }
\end{remark}

\begin{remark} \label{remark1-4}
\rm{ (1) Topologically, the Euclidean $n$-space $\mathbb{R}^n$ is
two-points homogenous, so generally it seems like there is no need
to point out the information of the center for the ball $B_{R}(o)$
when describing the isometry conclusion in Theorem \ref{theo-1}.
However, for the eigenvalue problem (\ref{eigen-1}), by
(\ref{chr-1}) one knows that even on Euclidean balls, the Neumann
eigenvalues $\mu_{i,\phi}$ also depend on the weighted function
$\phi$ (except the situation that $\phi$ is a constant function).
This implies that for Euclidean balls with the same radius but
different centers, they might have different Neumann eigenvalues
$\mu_{i,\phi}$ since generally the radial function $\phi$ here has
different distributions on different balls. Therefore, we need to
give the information of the center for the ball $B_{R}(o)$ when we
investigate the possible rigidity for the equality case of
(\ref{II-1}). \\
(2) A slightly sharper version of (\ref{II-1}) has also been
obtained -- for details, see Theorem \ref{theo-3} in Section
\ref{S3} below. \\
(3) As we know, if $\phi=const.$ is a constant function,
 then the Witten-Laplacian $\Delta_{\phi}$ degenerates into the
 Laplacian $\Delta$, and correspondingly the eigenvalue problem
 (\ref{eigen-1}) becomes the classical free membrane problem of the Laplacian
 $\Delta$ as follows
\begin{eqnarray} \label{eigen-2}
\left\{
\begin{array}{ll}
\Delta u+\mu u=0\qquad & \mathrm{in}~\Omega\subset M^{n}, \\[0.5mm]
\frac{\partial u}{\partial\vec{\nu}}=0 \qquad &
\mathrm{on}~{\partial\Omega}.
\end{array}
\right.
\end{eqnarray}
Clearly, the Laplacian $-\Delta$ in (\ref{eigen-2}) only has the
discrete spectrum, and all the eigenvalues in this discrete spectrum
can be listed non-decreasingly as follows
\begin{eqnarray*}
0=\mu_{0}(\Omega)<\mu_{1}(\Omega)\leq\mu_{2}(\Omega)\leq\cdots\uparrow+\infty.
\end{eqnarray*}
The corresponding Neumann eigenvalues $\mu_{k}$ can be characterized
similarly as (\ref{chr-1})-(\ref{chr-2}) with $\phi=const.$ instead.
Now, we would like to recall some results on isoperimetric
inequalities of Neumann eigenvalues of the eigenvalue problem
(\ref{eigen-2}). For simply connected bounded domains
$\Omega\subset\mathbb{R}^2$, by using the conformal mapping
techniques, Szeg\H{o} \cite{GS} obtained
 \begin{eqnarray}  \label{II-3}
 \mu_{1}(\Omega)A(\Omega)\leq\mu_{1}(\mathbb{D})A(\mathbb{D})=\pi
 p^{2}_{1,1},
 \end{eqnarray}
 where $\mathbb{D}$ stands for a disk in the plane $\mathbb{R}^2$,
 and $A(\cdot)$ denotes the area of a given geometric object. Later,
 this result was improved by Weinberger \cite{HFW} to the higher
 dimensional case, that is, for bounded domains
 $\Omega\subset\mathbb{R}^n$, $n\geq2$, he proved
  \begin{eqnarray}  \label{II-4}
  \mu_{1}(\Omega)\leq\left(\frac{w_n}{|\Omega|}\right)^{2/n}p^{2}_{n/2,1},
  \end{eqnarray}
where $w_{n}$, $|\Omega|$ denote\footnote{~Similarly, without
confusion, in the sequel $|\cdot|$ would denote the volume of a
given geometric object.} the volume of the unit ball in
$\mathbb{R}^{n}$ and the volume of $\Omega$, respectively. Here
$p_{v,k}$ in (\ref{II-3})-(\ref{II-4}) stands for the $k$-th
positive zero of the derivative of $x^{1-v}J_{v}(x)$, with
$J_{v}(x)$ the Bessel function of the first kind of order $v$. The
equality case in (\ref{II-3}) (or (\ref{II-4})) holds if and only if
$\Omega$ is a disk (or a ball in $\mathbb{R}^n$). Clearly, from the
Szeg\H{o}-Weinberger's isoperimetric inequality (\ref{II-4}), one
knows:
\begin{itemize}
\item (\textbf{Fact A}) Among all bounded domains in
$\mathbb{R}^n$ having the same volume, the ball maximizes the first
nonzero Neumann eigenvalue of the Laplacian.
\end{itemize}
It is not hard to see that \textbf{Fact A} was covered by Corollary
\ref{coro-1} as a special case (corresponding to $\phi=const.$).
Szeg\H{o} and Weinberger found that Szeg\H{o}'s proof of
(\ref{II-3}) for simply connected domains in $\mathbb{R}^2$ can be
improved to get the estimate
 \begin{eqnarray}  \label{II-5}
 \frac{1}{\mu_{1}(\Omega)}+\frac{1}{\mu_{2}(\Omega)}\geq\frac{2A(\Omega)}{\pi p^{2}_{1,1}}
 \end{eqnarray}
for such domains. Brasco and Pratelli \cite{BP} made a quantitative
improvement of (\ref{II-3}) -- for any bounded domain
$\Omega\subset\mathbb{R}^n$ with smooth boundary, they have proven
 \begin{eqnarray*}
 w_{n}^{2/n}p^{2}_{n/2,1} - \mu_{1}(\Omega)|\Omega|^{2/n}\geq
 c(n)\mathcal{A}(\Omega),
 \end{eqnarray*}
where $c(n)$ is a positive constant depending only on $n$, and
$\mathcal{A}(\Omega)$ is the so-called \emph{Fraenkel asymmetry}
defined by
\begin{eqnarray*}
\mathcal{A}(\Omega):=\inf\left\{\frac{|\Omega\triangle
B|}{|\Omega|}\Bigg{|}B~\mathrm{is~a~ball~in}~\mathbb{R}^{n}~\mathrm{such~that}~|\Omega|=|B|\right\},
\end{eqnarray*}
with $\Omega\triangle B$ the symmetric difference of $\Omega$ and
$B$. An interesting quantitative improvement of (\ref{II-5})
obtained by Nadirashvilli \cite{NN} states that there exists a
constant $C>0$ such that for every smooth simply connected bounded
open set $\Omega\subset\mathbb{R}^2$, it holds
 \begin{eqnarray*}
\frac{1}{|\Omega|}\left(\frac{1}{\mu_{1}(\Omega)}+\frac{1}{\mu_{2}(\Omega)}\right)-\frac{1}{|B|}
\left(\frac{1}{\mu_{1}(B)}+\frac{1}{\mu_{2}(B)}\right)\geq\frac{\mathcal{A}(\Omega)^2}{C},
 \end{eqnarray*}
with $B$ any disk in $\mathbb{R}^2$. Is it possible to improve
(\ref{II-5}) to the higher dimensional case? The answer is
affirmative. In fact, for any bounded domain
$\Omega\subset\mathbb{R}^n$ (with smooth boundary), Ashbaugh and
Benguria \cite{AB} obtained the estimate
\begin{eqnarray} \label{II-6}
\frac{1}{\mu_{1}(\Omega)}+\frac{1}{\mu_{2}(\Omega)}+\cdots+\frac{1}{\mu_{n}(\Omega)}\geq\frac{n}{n+2}\left(\frac{|\Omega|}{w_{n}}\right)^{2/n}.
\end{eqnarray}
Some interesting generalizations to (\ref{II-6}) have been done --
see, e.g., \cite{LMW,CYX}. Based on the estimate (\ref{II-6}),
Ashbaugh and Benguria \cite{AB} proposed an important open problem
as follows:
\begin{itemize}
\item \textbf{Conjecture I}. (\cite{AB}) For any bounded domain
$\Omega$ with smooth boundary in $\mathbb{R}^n$, we have
 \begin{eqnarray*}
\frac{1}{\mu_{1}(\Omega)}+\frac{1}{\mu_{2}(\Omega)}+\cdots+\frac{1}{\mu_{n}(\Omega)}\geq\frac{n}{p^{2}_{n/2,1}}\left(\frac{|\Omega|}{w_n}\right)^{2/n},
 \end{eqnarray*}
 with equality holding if and only if $\Omega$ is a ball in
 $\mathbb{R}^{n}$.
\end{itemize}
\textbf{Conjecture I} is still open until now. Recently,  Xia-Wang
\cite{XW} gave a very important progress to this celebrated
conjecture, and actually they proved that
\begin{eqnarray} \label{II-7}
\frac{1}{\mu_{1}(\Omega)}+\frac{1}{\mu_{2}(\Omega)}+\cdots+\frac{1}{\mu_{n-1}(\Omega)}\geq\frac{n-1}{p^{2}_{n/2,1}}\left(\frac{|\Omega|}{w_n}\right)^{2/n}
 \end{eqnarray}
holds for any bounded domain $\Omega\subset\mathbb{R}^n$ with smooth
boundary, where the equality holds if and only if $\Omega$ is a ball
in $\mathbb{R}^{n}$. Clearly, the isoperimetric inequality
(\ref{II-7}) gives a partial answer to \textbf{Conjecture I} and
also supports its validity. It is not hard to see that our
conclusion in Theorem \ref{theo-1} here covers Xia-Wang's spectral
isoperimetric inequality (\ref{II-7}) as a special case
(corresponding to $\phi=const.$).
 }
\end{remark}

\begin{theorem} \label{theo-2}
Assume that the function $\phi$ satisfies \textbf{Property I}. Let
$\Omega$ be a bounded domain in $\mathbb{H}^n$, and let $B_{R}(o)$
be a geodesic ball of radius $R$ and centered at the
 origin $o$
 of
 $\mathbb{H}^{n}$ such that $|\Omega|_{\phi}=|B_{R}(o)|_{\phi}$.
 Then
 \begin{eqnarray} \label{II-1-1}
\frac{1}{\mu_{1,\phi}(\Omega)}+\frac{1}{\mu_{2,\phi}(\Omega)}+\cdots+\frac{1}{\mu_{n-1,\phi}(\Omega)}\geq\frac{n-1}{\mu_{1,\phi}(B_{R}(o))}.
\end{eqnarray}
The equality case holds if and only if $\Omega$ is isometric to the
geodesic ball $B_{R}(o)$.
\end{theorem}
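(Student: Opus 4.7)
The plan is to adapt the strategy used to prove Theorem \ref{theo-1} to the warped-product structure of $\mathbb{H}^{n}$. In geodesic polar coordinates $(t, \xi) \in [0, \infty) \times S^{n-1}$ centered at $o$, the hyperbolic metric is $dt^{2} + \sinh^{2}(t)\, d\sigma^{2}$ with $d\sigma^{2}$ the round metric on $S^{n-1}$. On the geodesic ball $B_{R}(o)$, each first nonzero Neumann eigenfunction of $\Delta_{\phi}$ has the separated form $g(t)\,\langle \xi, v\rangle$ for some unit $v \in T_{o}\mathbb{H}^{n}$, with $g$ the positive solution on $[0, R]$ of the Sturm--Liouville problem
\begin{equation*}
-g'' - \bigl((n-1)\coth t - \phi'(t)\bigr)\, g' + \frac{n-1}{\sinh^{2} t}\, g \;=\; \mu_{1,\phi}(B_{R}(o))\, g, \qquad g(0) = 0, \quad g'(R) = 0.
\end{equation*}
First I would extend $g$ to a Lipschitz function $G$ on $[0, \infty)$ by setting $G(t) = g(R)$ for $t \geq R$, and then, using the ODE together with Property I on $\phi$, extract the qualitative properties of $G$ (positivity, monotonicity) and of the derived radial integrands appearing below.

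Next I would build trial functions $\tilde\psi_{k}(x) = G(t(x))\,\langle \xi(x), v_{k}\rangle$ for $k = 1, \dots, n-1$, where $\{v_{1}, \dots, v_{n}\}$ is an orthonormal basis of $T_{o}\mathbb{H}^{n}$ chosen so that $\tilde\psi_{k}$ is $L^{2}(\Omega, d\eta)$-orthogonal to the eigenfunctions $u_{0}, u_{1}, \dots, u_{k-1}$ of $\Omega$. Such a basis exists by a linear-algebra argument (as in Xia-Wang \cite{XW}): aligning $v_{n}$ with the weighted vector $\vec m_{0} := \int_{\Omega} G(t)\,\xi\, d\eta$ and then performing Gram--Schmidt on the projections of $\vec m_{j} := \int_{\Omega} G(t)\,\xi\, u_{j}\, d\eta$ ($j = 1, \dots, n-2$) into $v_{n}^{\perp}$ produces the required upper-triangular structure. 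The variational characterization (\ref{chr-1}) then yields
\begin{equation*}
\mu_{k,\phi}(\Omega) \;\leq\; \frac{\int_{\Omega} \bigl[\, G'(t)^{2}\,h_{k}^{2} + \tfrac{G(t)^{2}}{\sinh^{2} t}\,(1 - h_{k}^{2})\,\bigr]\, d\eta}{\int_{\Omega} G(t)^{2}\,h_{k}^{2}\, d\eta}, \qquad h_{k}(\xi) := \langle \xi, v_{k}\rangle.
\end{equation*}

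I would then take reciprocals, sum over $k = 1, \dots, n-1$, and use the pointwise identity $\sum_{k=1}^{n} h_{k}^{2}(\xi) = 1$ (together with an $h_{n}^{2}$-defect that is controlled by the choice of $v_{n}$) to collapse the angular dependence and reduce the problem to a single radial inequality. A convexity / Cauchy--Schwarz step (using $\sum D_{k}/N_{k} \geq (\sum D_{k})^{2} / \sum D_{k}N_{k}$, with $D_{k}, N_{k}$ denoting the above denominators and numerators) bounds the sum of reciprocals below by a quotient of integrals of explicit radial functions against $d\eta$ over $\Omega$. At this point, the hypothesis $|\Omega|_{\phi} = |B_{R}(o)|_{\phi}$, combined with the monotonicity in $t$ of the integrands inherited from $G$, permits a weighted radial-rearrangement comparison that replaces $\Omega$ by $B_{R}(o)$; the resulting expression matches $(n-1)/\mu_{1,\phi}(B_{R}(o))$ by the exact Rayleigh quotient computation for the ball.

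The main obstacle will be rigorously establishing the monotonicity properties of $G'(t)^{2}$, $G(t)^{2}/\sinh^{2}(t)$, and any combination appearing in the final radial inequality: in the Euclidean case these rest on classical properties of Bessel-type radial eigenfunctions, but in the hyperbolic case they must be extracted from the Sturm--Liouville equation for $g$ by careful qualitative analysis, making essential use of $\phi$ being non-increasing and convex. The rigidity statement will then follow by tracing equality through both the variational upper bounds (forcing each $\tilde\psi_{k}$ to be an actual eigenfunction on $\Omega$ with eigenvalue $\mu_{1,\phi}(B_{R}(o))$) and the rearrangement step (forcing $\Omega$ to be a geodesic ball of radius $R$ centered at $o$, hence isometric to $B_{R}(o)$).
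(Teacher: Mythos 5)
Your overall framework (geodesic polar coordinates, trial functions $G(t)\langle\xi,v_k\rangle$ on $\Omega$, weighted rearrangement to replace $\Omega$ by $B_R(o)$, rigidity from equality in the rearrangement) matches the paper, but there is a genuine gap at the central algebraic step, and it stems from a structural choice you make earlier. You construct only $n-1$ trial functions, disposing of the constant-orthogonality requirement by aligning $v_n$ with $\vec m_0=\int_\Omega G\xi\,d\eta$. The paper instead constructs \emph{all $n$} trial functions $\psi_i=x_i f(t)/t$: orthogonality to $u_0$ for every $i$ is obtained by a Brouwer fixed-point argument on the choice of center $o$ (so that $\int_\Omega f(t)\tfrac{x_i}{t}\,d\eta=0$ for $i=1,\dots,n$), and then a QR/Gram--Schmidt rotation of the frame gives $\psi_i\perp u_0,\dots,u_{i-1}$ for $i=1,\dots,n$. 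Having $n$ Rayleigh-quotient bounds is not cosmetic: the paper's key step is the pointwise inequality (quoted from Xia--Wang)
\begin{equation*}
\sum_{i=1}^{n}\frac{1}{\mu_{i,\phi}(\Omega)}\Bigl|\overline\nabla\Bigl(\frac{x_i}{t}\Bigr)\Bigr|^2\;\leq\;\sum_{i=1}^{n-1}\frac{1}{\mu_{i,\phi}(\Omega)},
\end{equation*}
which uses the eigenvalue ordering together with $\sum_{i=1}^{n}|\overline\nabla(x_i/t)|^2=n-1$ and $|\overline\nabla(x_i/t)|^2\leq 1$ to \emph{replace} the weighted $n$-term sum on the left with the unweighted $(n-1)$-term sum on the right after summing the Rayleigh-quotient estimates over all $n$ trial functions. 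The residual $i=n$ contribution from the $(f')^2$ part is then absorbed via $\frac{1}{n\mu_{n,\phi}}\leq\frac{1}{n(n-1)}\sum_{i=1}^{n-1}\frac{1}{\mu_{i,\phi}}$. This is the mechanism that produces exactly the radial quantity $\int_{B_R(o)}[(f')^2+(n-1)f^2/\sinh^2 t]\,d\eta$ over $\int_{B_R(o)}f^2\,d\eta$, i.e.\ $\mu_{1,\phi}(B_R(o))$.

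Your proposed substitute, the Engel/Cauchy--Schwarz bound $\sum_k D_k/N_k\geq(\sum_k D_k)^2/\sum_k D_kN_k$ over $k=1,\dots,n-1$, does not accomplish this. With only $n-1$ terms you have $\sum_{k=1}^{n-1}h_k^2=1-h_n^2$, so $\sum_k D_k=\int_\Omega G^2(1-h_n^2)\,d\eta$ and, worse, $\sum_k D_kN_k$ is a product of integrals each carrying $h_k^2$-weights; neither reduces to a purely radial quantity, and the $h_n^2$ ``defect'' is \emph{not} controlled by aligning $v_n$ with $\vec m_0$ (that alignment gives orthogonality to constants for $k<n$, not smallness of $\int_\Omega G^2h_n^2\,d\eta$). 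So the step ``collapse the angular dependence and reduce to a single radial inequality'' is precisely where the argument stalls. To repair it you should (i) choose $o$ by Brouwer so that all $n$ functions $G\langle\xi,v_i\rangle$ are orthogonal to $u_0$, (ii) perform Gram--Schmidt to obtain the full triangular orthogonality $\psi_i\perp u_j$ for $j<i\leq n$, and (iii) invoke the Xia--Wang pointwise inequality above together with the $\mu_{n,\phi}$-absorption step, exactly as in the paper. Your remaining ingredients (the Sturm--Liouville analysis of $G$ on $[0,R]$ with the hyperbolic coefficient $\coth t$, the monotonicity of $G/\sinh t$ from $\phi'\leq 0$ and convexity of $\phi$, and the two rearrangement inequalities $\int_\Omega G^2\,d\eta\geq\int_{B_R(o)}G^2\,d\eta$ and $\int_\Omega G^2/\sinh^2 t\,d\eta\leq\int_{B_R(o)}G^2/\sinh^2 t\,d\eta$) line up with Lemma \ref{lemma2-3} and the cited results in \cite{CM}, and the rigidity discussion is consistent with the paper's.
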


Similarly, by applying the sequence (\ref{sequence-1}),  from
(\ref{II-1-1}) one has:

\begin{corollary} \label{coro-2}
Under the assumptions of Theorem \ref{theo-2}, we have
\begin{eqnarray} \label{II-2-2}
\mu_{1,\phi}(\Omega)\leq\mu_{1,\phi}(B_{R}(o)),
\end{eqnarray}
with equality holding if and only if $\Omega$ is isometric to
$B_{R}(o)$. That is to say, among all bounded domains in
$\mathbb{H}^n$ having the same weighted volume, the geodesic ball
$B_{R}(o)$ maximizes the first nonzero Neumann eigenvalue of the
Witten-Laplacian, provided the function $\phi$ satisfies
\textbf{Property I}.
\end{corollary}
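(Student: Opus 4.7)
The plan is to adapt the strategy behind Theorem \ref{theo-1} to geodesic polar coordinates $(t,\theta)$ based at the origin $o\in\mathbb{H}^n$, in which the metric reads $dt^2+\sinh^2(t)\,d\sigma^2$ and the weighted volume element is $e^{-\phi(t)}\sinh^{n-1}(t)\,dt\,d\sigma$. The first step is to single out the radial profile $G:[0,R]\to\mathbb{R}$ of a first nonzero Neumann eigenfunction on $B_R(o)$: $G$ solves
\begin{equation*}
G''(t)+\bigl((n-1)\coth t-\phi'(t)\bigr)G'(t)-\frac{n-1}{\sinh^{2}t}G(t)+\mu_{1,\phi}(B_{R}(o))G(t)=0
\end{equation*}
subject to $G(0)=0$ and $G'(R)=0$, so that $G(t)\xi(\theta)$ is an eigenfunction whenever $\xi$ is a first-order spherical harmonic; I then extend $G$ beyond $R$ by the constant $G(R)$, preserving $C^1$ regularity across $t=R$.

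Next, fix an orthonormal basis $\{e_i\}_{i=1}^{n}$ of $T_o\mathbb{H}^n$ and set $\xi_i(\exp_o(tv)):=\langle v,e_i\rangle$ for $v\in S^{n-1}\subset T_o\mathbb{H}^n$, so that $\sum_i\xi_i^2\equiv 1$ and $\sum_i|\nabla\xi_i|^2\equiv(n-1)/\sinh^2 t$. The trial functions $u_i(x):=G(d(o,x))\,\xi_i(x)$ on $\Omega$ are automatically $L^2(\Omega,d\eta)$-orthogonal to constants because $\xi_i$ averages to zero on every geodesic sphere centered at $o$. To make each $u_i$ further orthogonal to the first $i-1$ nonzero Neumann eigenfunctions of $\Delta_\phi$ on $\Omega$ (and hence admissible for $\mu_{i,\phi}(\Omega)$ via (\ref{chr-1})), I invoke a Brouwer fixed-point argument on $SO(n)$ rotating the frame $\{e_i\}$, which yields
\begin{equation*}
\mu_{i,\phi}(\Omega)\int_{\Omega}u_i^2\,d\eta\leq\int_{\Omega}|\nabla u_i|^{2}\,d\eta,\qquad i=1,\ldots,n-1.
\end{equation*}

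Using the pointwise identity $|\nabla u_i|^2=(G')^2\xi_i^2+G^2|\nabla\xi_i|^2$ together with the spherical identities above, the totals $\sum_{i=1}^n\int_\Omega u_i^2\,d\eta=\int_\Omega G(t)^2\,d\eta$ and $\sum_{i=1}^n\int_\Omega|\nabla u_i|^2\,d\eta=\int_\Omega B(t)\,d\eta$ are $SO(n)$-invariant, where $B(t):=(G'(t))^2+\frac{n-1}{\sinh^2 t}G(t)^2$. Following the reciprocal-sum manipulation used in Theorem \ref{theo-1} (the Xia-Wang argument applied to the preceding $n-1$ Rayleigh inequalities), the claim (\ref{II-1-1}) reduces to establishing
\begin{equation*}
\int_{\Omega}B(t)\,d\eta\leq\mu_{1,\phi}(B_R(o))\int_{\Omega}G(t)^{2}\,d\eta,
\end{equation*}
which I would prove by bathtub-type rearrangement: from Property~I and the radial ODE one verifies that $G(t)^{2}$ is non-decreasing and $B(t)$ non-increasing on $[0,R]$ (and both are constant for $t\geq R$), so the hypothesis $|\Omega|_\phi=|B_R(o)|_\phi$ allows the weighted mass on $\Omega\setminus B_R(o)$ to be symmetrically exchanged with that on $B_R(o)\setminus\Omega$, reducing the inequality to its equality on $B_R(o)$ (obtained by multiplying the radial ODE by $G$ and integrating with respect to $d\eta$).

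The main technical obstacle is the monotonicity of $B$ on $[0,R]$ in the hyperbolic setting: the singular coefficient $(n-1)\coth t$, rather than the Euclidean $(n-1)/t$, complicates the sign analysis of $B'(t)$ obtained by differentiating and substituting the radial ODE, and one must carefully exploit $\phi'\leq 0$ and $\phi''\geq 0$ granted by Property~I, together with the elementary inequality $\coth t\leq 1/t+t/3$, to absorb the extra hyperbolic curvature terms. For the rigidity statement, equality throughout the bathtub step forces $\Omega$ to coincide with $B_R(o)$ up to a null set on the subset where $G^2$ or $B$ is strictly monotone; combined with the smoothness of $\partial\Omega$ and the strict convexity of $\phi$ at relevant radii, this upgrades to the isometric identification $\Omega\cong B_R(o)$ claimed in the theorem.
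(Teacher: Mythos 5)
Your plan re-derives the full Theorem \ref{theo-2} from scratch rather than deducing the corollary from it. The paper's own proof of Corollary \ref{coro-2} is a one-line argument: since $\mu_{1,\phi}(\Omega)\leq\mu_{2,\phi}(\Omega)\leq\cdots\leq\mu_{n-1,\phi}(\Omega)$ by (\ref{sequence-1}), the left-hand side of (\ref{II-1-1}) is at most $\frac{n-1}{\mu_{1,\phi}(\Omega)}$, and combining with (\ref{II-1-1}) immediately gives (\ref{II-2-2}), with the rigidity inherited from Theorem \ref{theo-2}. Your proposal never makes this elementary deduction explicit; it stops at ``the claim (\ref{II-1-1}) reduces to establishing\dots'', which is the theorem, not the corollary. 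Even taking your route at face value, you are replaying the argument of Section~\ref{S2} (Brouwer fixed point to center the frame, Gram--Schmidt/QR to orthogonalize against $u_1,\dots,u_{i-1}$, the Xia--Wang inequality (\ref{2-13}), and a rearrangement step), so it is not really a new proof -- it is the proof of Theorem \ref{theo-2} with a missing final line.

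On the one point you flag as a ``main technical obstacle'' -- the monotonicity of $B(t)=(G')^2+(n-1)G^2/\sinh^2 t$ in $\mathbb{H}^n$ -- your proposed use of $\coth t\leq 1/t+t/3$ is a red herring and would not lead anywhere. Differentiating $B$ and substituting the ODE gives
\begin{equation*}
B'(t)=2\phi'(t)(G')^2-2\mu_{1,\phi}(B_R(o))\,G\,G'-\frac{2(n-1)}{\sinh t}\left[\cosh t\,(G')^2-\frac{2GG'}{\sinh t}+\frac{\cosh t\,G^2}{\sinh^2 t}\right],
\end{equation*}
and the bracket is a quadratic in $G'$ and $G/\sinh t$ with leading coefficient $\cosh t>0$ and discriminant $4(1-\cosh^2 t)(G/\sinh t)^2=-4(G/\sinh t)^2\sinh^2 t\leq 0$, hence nonnegative; together with $\phi'\leq 0$, $G,G'\geq 0$ and $\mu_{1,\phi}>0$, all three terms are $\leq 0$, so $B$ is non-increasing with no Taylor estimate needed. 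In any case the paper sidesteps even this: Lemma \ref{lemma2-3} only needs $f/S_\kappa$ decreasing, and the $(f')^2$ piece is handled trivially by $(f')^2\equiv 0$ on $t>R$. You should simply cite Theorem \ref{theo-2} plus eigenvalue monotonicity, as the paper does.
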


\begin{remark}
\rm{ Similar to the Euclidean case, the spectral isoperimetric
inequality (\ref{II-2-2}) in Corollary \ref{coro-2} has already been
proven by the authors in \cite{CM} by suitably constructing the
trial function. However, we still wish to list it here to show the
close relation between (\ref{II-1-1}) and (\ref{II-2-2}), and show
of course  the significance of the spectral isoperimetric inequality
(\ref{II-1-1}) as well.  }
\end{remark}

\begin{remark}
\rm{ (1) Similar to (1) of Remark \ref{remark1-4}, except the
situation that $\phi$ is a constant function, one also needs to give
the information of the center for the geodesic ball $B_{R}(o)$
mentioned in Theorem \ref{theo-2} and Corollary \ref{coro-2}. \\
 (2) When investigating spectral isoperimetric inequalities (\ref{II-1-1})-(\ref{II-2-2}), there is no essential difference between $\mathbb{H}^{n}$ and
 a hyperbolic $n$-space with constant curvature not equal to $-1$.\\
(3) Ashbaugh and Benguria \cite{AB} also proposed another important
open problem as follows:
\begin{itemize}
\item \textbf{Conjecture II}. (\cite{AB}) Let
$\mathbb{M}^{n}(\kappa)$ be an $n$-dimensional complete simply
connected Riemannian manifold of constant sectional curvature
$\kappa\in\{1,-1\}$, and $\Omega$ be a bounded domain in
$\mathbb{M}^{n}(\kappa)$ which is contained in a hemisphere in the
case that $\kappa=1$. Let $B_{\Omega}$ be a geodesic ball in
$\mathbb{M}^{n}(\kappa)$ such that $|\Omega|=|B_{\Omega}|$. Then
\begin{eqnarray*}
\frac{1}{\mu_{1}(\Omega)}+\frac{1}{\mu_{2}(\Omega)}+\cdots+\frac{1}{\mu_{n}(\Omega)}\geq\frac{n}{\mu_{1}(B_{\Omega})},
 \end{eqnarray*}
 with equality holding if and only if $\Omega$ is isometric to $B_{\Omega}$.
\end{itemize}
\textbf{Conjecture II} is still open until now. Recently,  Xia-Wang
\cite{XW} also gave a very important progress to this celebrated
conjecture, and actually they proved  that
\begin{eqnarray} \label{II-7-1}
\frac{1}{\mu_{1}(\Omega)}+\frac{1}{\mu_{2}(\Omega)}+\cdots+\frac{1}{\mu_{n-1}(\Omega)}\geq\frac{n-1}{\mu_{1}(B_{\Omega})}
 \end{eqnarray}
holds for any bounded domain $\Omega\subset\mathbb{H}^n$ with smooth
boundary, and for a geodesic ball $B_{\Omega}\subset\mathbb{H}^n$
with $|\Omega|=|B_{\Omega}|$. Moreover, the equality in
(\ref{II-7-1}) holds if and only if $\Omega$ isometric to
$B_{\Omega}$ in $\mathbb{H}^{n}$. Clearly, the isoperimetric
inequality (\ref{II-7-1}) gives a partial answer to
\textbf{Conjecture II} and also supports its validity. It is not
hard to see that our conclusion in Theorem \ref{theo-2} here covers
Xia-Wang's spectral isoperimetric inequality (\ref{II-7-1}) as a
special case (corresponding to $\phi=const.$). }
\end{remark}

\begin{remark}
\rm{ Readers who have interest in this topic might want to know
``\emph{except the Euclidean case and the hyperbolic case, what else
can we expect?}". In fact, after we finished the first version of
this paper at the beginning of 2024, the following related
conclusions have been obtained successfully and quickly. They are:
\begin{itemize}
\item (\cite{DMCW}) For an $n$-dimensional ($n\geq2$) Riemannian manifold $M^{n}$
with non-positive sectional curvature, assume further that the Ricci
curvature of $M^{n}$ satisfies $\mathrm{Ric}(M^{n})\geq(n-1)K$ for
some $K\leq0$. Let $\Omega\subset M^{n}$ be a bounded domain with
smooth boundary, and $\phi$ be a real-valued smooth function defined
on $M^{n}$. Denote by $\Omega^{\ast}\subset\mathbb{R}^{n}$ a
Euclidean $n$-ball having the same volume as $\Omega$, i.e.
$|\Omega^{\ast}|=|\Omega|$, and denote by $\phi^{\ast}$ the
spherically symmetric decreasing rearrangement of $\phi$, which is
defined on $\Omega^{\ast}$. If $\phi^{\ast}$ is convex, then for the
eigenvalue problem (\ref{eigen-1}) one has
\begin{eqnarray} \label{II-1-1x}
\frac{1}{\mu_{1,\phi}(\Omega)}+\frac{1}{\mu_{2,\phi}(\Omega)}+\cdots+\frac{1}{\mu_{n-1,\phi}(\Omega)}\geq\frac{1}{C^{2}}\frac{n-1}{\mu_{1,\phi^{\ast}}(\Omega^{\ast})},
\end{eqnarray}
where
$C=\left(\frac{S_{K}(\mathrm{diam}(\Omega))}{\mathrm{diam}(\Omega)}\right)^{n-1}$,
$\mathrm{diam}(\Omega)$ denotes the diameter of $\Omega$, and the
function $S_{K}(\cdot)$ is defined by
\begin{eqnarray*}
S_{K}(t)=\left\{
\begin{array}{llll}
\frac{\sin\sqrt{K}t}{\sqrt{K}}, &  \quad K>0,\\
 t, & \quad K=0, \\
\frac{\sinh\sqrt{-K}t}{\sqrt{-K}}, & \quad K<0.
\end{array}
\right.
\end{eqnarray*}
Applying the sequence (\ref{sequence-1}), from (\ref{II-1-1x}), it
follows that
\begin{eqnarray} \label{extra-1}
\mu_{1,\phi}(\Omega)\leq\left(\frac{S_{K}(\mathrm{diam}(\Omega))}{\mathrm{diam}(\Omega)}\right)^{2(n-1)}\mu_{1,\phi^{\ast}}(\Omega^{\ast}).
\end{eqnarray}
Especially, if $\phi=const.$ is a constant function, then
$\phi^{\ast}$ would be a constant function also, and
correspondingly, isoperimetric inequalities
(\ref{II-1-1x})-(\ref{extra-1}) directly become
\begin{eqnarray*}
\frac{1}{\mu_{1}(\Omega)}+\frac{1}{\mu_{2}(\Omega)}+\cdots+\frac{1}{\mu_{n-1}(\Omega)}\geq\frac{1}{C^{2}}\frac{n-1}{\mu_{1}(\Omega^{\ast})},\\
\mu_{1}(\Omega)\leq\left(\frac{S_{K}(\mathrm{diam}(\Omega))}{\mathrm{diam}(\Omega)}\right)^{2(n-1)}\mu_{1}(\Omega^{\ast}).~~~~~
\end{eqnarray*}
These two estimates have already been shown in \cite{CH,WK} (as a
special case). BTW, as pointed out in \cite[Remarks 1.7 and
1.8]{DMCW}, one knows: (i) if $K=0$, then the manifold $M^{n}$
considered (in \cite{DMCW}) degenerates into a Ricci-flat manifold
(i.e. $\mathrm{Ric}(M^n)=0$), and consequently,
$S_{K}(\mathrm{diam}(\Omega))=\mathrm{diam}(\Omega)$, $C=1$,
 \begin{eqnarray} \label{extra-2}
\mu_{1,\phi}(\Omega)\leq\mu_{1,\phi^{\ast}}(\Omega^{\ast}),
 \end{eqnarray}
 whose
special case is $\mu_{1}(\Omega)\leq\mu_{1}(\Omega^{\ast})$
(corresponding to $\phi=const.$). Clearly, the estimate
(\ref{extra-2}) can somehow be regarded as a Szeg\"{o}-Weinberger
type spectral isoperimetric inequality under the geometric
constraint $|\Omega^{\ast}|=|\Omega|$ (i.e. the volume is fixed);
(ii) if $M^{n}$ was chosen to be $\mathbb{R}^n$, then
$S_{K}(\mathrm{diam}(\Omega))=\mathrm{diam}(\Omega)$, $C=1$, and the
estimate (\ref{II-1-1x}) becomes
\begin{eqnarray} \label{II-1-2}
\frac{1}{\mu_{1,\phi}(\Omega)}+\frac{1}{\mu_{2,\phi}(\Omega)}+\cdots+\frac{1}{\mu_{n-1,\phi}(\Omega)}\geq\frac{n-1}{\mu_{1,\phi^{\ast}}(\Omega^{\ast})}.
\end{eqnarray}
Specially, if $\phi=const.$ is a constant function, then
(\ref{II-1-2}) degenerates into
\begin{eqnarray*}
\frac{1}{\mu_{1}(\Omega)}+\frac{1}{\mu_{2}(\Omega)}+\cdots+\frac{1}{\mu_{n-1}(\Omega)}\geq\frac{n-1}{\mu_{1}(\Omega^{\ast})}
\end{eqnarray*}
(under the constraint $|\Omega^{\ast}|=|\Omega|$), which is exactly
the isoperimetric inequality (\ref{II-7}). Hence, comparing with
Theorem \ref{theo-1} here, the isoperimetric inequality
(\ref{II-1-1x}), which is walking on another direction and using a
different geometric constraint, can also be seen as a generalization
to Xia-Wang's estimate (\ref{II-7}).

\item (\cite{NMCW}) Denote by $\mathbb{S}^{n}_{+}$ the
$n$-dimensional unit\footnote{~When investigating spectral
isoperimetric inequality (\ref{extra-sph-2}) and related
isoperimetric inequalities on hemispheres, there is no essential
difference between $\mathbb{S}^{n}_{+}$ and
 an $n$-hemisphere with radius not equal to $1$.} hemisphere. Let
$\Omega\subset\mathbb{S}^{n}_{+}$ be bounded domains (on
$\mathbb{S}^{n}_{+}$) with smooth boundary and fixed volume (i.e.
$|\Omega|=const.$), $o$ be the vortex of $\mathbb{S}^{n}_{+}$ (i.e.
in this setting, $\mathbb{S}^{n}_{+}$ was treated as a spherical
cap\footnote{~As mentioned in \cite[Remark 1.6]{NMCW},
$\mathbb{S}^{n}_{+}$ can be seen as a warped product manifold
$\left[0,\frac{\pi}{2}\right]\times_{\sin t}\mathbb{S}^{n-1}$ with
the metric $dt^{2}+\sin^{2}t\cdot g_{\mathbb{S}^{n-1}}$, where
$g_{\mathbb{S}^{n-1}}$ is the round metric on $\mathbb{S}^{n-1}$. In
fact, this kind of warped product manifolds is called spherically
symmetric manifolds. For the notion and some fundamental properties
of warped product manifolds (or more specially, spherically
symmetric manifolds), see e.g. \cite{MDW,PP} for details.}), and
$B_{R}(o)\subset\mathbb{S}^{n}_{+}$ be the geodesic ball with center
$o$ and radius $R$ such that $|B_{R}(o)|=|\Omega|$. If
$|\Omega\setminus B_{R}(o)|+|\Omega|<|\mathbb{S}^{n}_{+}|$, then for
the eigenvalue problem (\ref{eigen-2}) one has
\begin{eqnarray} \label{extra-sph-2}
&&\left(\mu_{1}(B_{R}(o))-\frac{n-1}{\sum_{i=1}^{n-1}\mu_{i}(\Omega)}\right)\int_{B_{R}(o)}f^{2}dv\nonumber\\
&&\qquad \geq(n-1)\left[\int_{B_{R}(o)\setminus
B_{1}}\frac{f^{2}}{\sin^{2}(t)}dv-\int_{B_{2}\setminus
B_{R}(o)}\frac{f^{2}}{\sin^{2}(t)}dv\right]\nonumber\\
&&\qquad
=(n-1)\left[\int_{R_1}^{R}f^{2}(t)\sin^{n-3}(t)dt-f^{2}(R)\int_{R}^{R_2}\sin^{n-3}(t)dt\right],
\end{eqnarray}
with equality holding if and only if $\Omega=B_{R}(o)$, where
$B_{1}$, $B_{2}$ are geodesic balls on $\mathbb{S}^{n}_{+}$
satisfying separately the volume constraints
 \begin{eqnarray*}
|\Omega\cap B_{R}(o)|=|B_1| \quad \mathrm{and}\quad |\Omega\setminus
B_{R}(o)|=|B_{2}\setminus B_{R}(o)|,
\end{eqnarray*}
the function $f(t)$ is defined by
\begin{eqnarray*}
f(t)=\left\{
\begin{array}{lll}
T(t), &  0\leq t<R,\\
T(R), & t\geq R,
\end{array}
\right.
\end{eqnarray*}
with $T(t)$ the solution to the following system
\begin{eqnarray} \label{extra-sph-1}
\left\{
\begin{array}{lll}
T''(t)+\frac{(n-1)\cos t}{\sin t}T'(t)+\left(\mu_{1}(B_{R}(o))-\frac{n-1}{\sin^{2}t}\right)T(t)=0, \\
\\
  T(0)=0,~~T'(R)=0,~~T'|_{[0,R)}\neq0.
\end{array}
\right.
\end{eqnarray}
Here $T(0)=0$ is made to assure the smoothness of $T(t)$. It is not
hard to see that the eigenfunction $u(t,\xi)$ of the first nonzero
Neumann eigenvalue $\mu_{1}(B_{R}(o))$ has the form
$u(x,\xi)=T(t)G(\xi)$, $\xi\in\mathbb{S}^{n-1}$, where $G(\xi)$
should be the restriction of homogeneous harmonic polynomials to
$\mathbb{S}^{n-1}$. That is to say, the solution $T(t)$ to the
system (\ref{extra-sph-1}) is the radial part of the eigenfunction
$u(t,\xi)$. As mentioned in \cite[Remark 1.7]{NMCW}, since
$f(t)/\sin t$ is monotone non-increasing, one can get the fact that
RHS of (\ref{extra-sph-2}) is nonnegative, which directly implies
 \begin{eqnarray*}
\frac{1}{\mu_{1}(\Omega)}+\frac{1}{\mu_{2}(\Omega)}+\cdots+\frac{1}{\mu_{n-1}(\Omega)}\geq\frac{n-1}{\mu_{1}(B_{R}(o))}.
 \end{eqnarray*}
This is exactly the estimate in \cite[Theorem 1.1]{BBC}. Our main
approach in \cite{NMCW} for deriving (\ref{extra-sph-2}) is the one
used in Section \ref{S3} here. So, for bounded domains
$\Omega\subset\mathbb{S}^{n}_{+}$ satisfying $|\Omega\setminus
B_{R}(o)|+|\Omega|<|\mathbb{S}^{n}_{+}|$, our estimate
(\ref{extra-sph-2}) is sharper than the one in \cite[Theorem
1.1]{BBC}.
\end{itemize}
We have attempted to improve the estimates in \cite{BBC, NMCW} to
the case of Witten-Laplacian, but so far no ideal results have been
obtained yet. More precisely, if one primarily uses the approach in
\cite{BBC} or \cite{NMCW}, some accurate estimates involving
eigenfunctions of the first nonzero Neumann eigenvalue cannot work
any more except the situation that the weighted function $\phi$ is a
constant function.

In sum, comparing with our main results in \cite{DMCW, NMCW}, one
can see that spectral isoperimetric inequalities of the
Witten-Laplacian stated in Theorems \ref{theo-1} and \ref{theo-2}
for bounded domains in $\mathbb{R}^n$ or $\mathbb{H}^n$ are much
cleaner but much less restrictive. Hence, we wish only to deal with
the Euclidean case and the hyperbolic case in this paper. }
\end{remark}

Based on the deriving process of our main conclusions in Theorems
\ref{theo-1} and \ref{theo-2}, we would like to propose the
following two open problems, which we think it should be suitable to
call them \emph{\textbf{the Ashbaugh-Benguria type conjecture}}.

\textbf{Question A}. Consider the eigenvalue problem (\ref{eigen-1})
with choosing $M^n$ to be $M^{n}=\mathbb{R}^n$, and assume that the
function $\phi$ satisfies \textbf{Property I}. Let $\Omega$ be a
bounded domain with smooth boundary in
 $\mathbb{R}^n$, and let $B_{R}(o)$ be a ball of radius $R$ and centered at the
 origin $o$
 of
 $\mathbb{R}^{n}$ such that $|\Omega|_{\phi}=|B_{R}(o)|_{\phi}$.
 Then
  \begin{eqnarray*}
\frac{1}{\mu_{1,\phi}(\Omega)}+\frac{1}{\mu_{2,\phi}(\Omega)}+\cdots+\frac{1}{\mu_{n-1,\phi}(\Omega)}+\frac{1}{\mu_{n,\phi}(\Omega)}\geq\frac{n}{\mu_{1,\phi}(B_{R}(o))}.
\end{eqnarray*}
The equality case holds if and only if $\Omega$ is the ball
$B_{R}(o)$.

\textbf{Question B}. Consider the eigenvalue problem (\ref{eigen-1})
with choosing $M^n$ to be $M^{n}=\mathbb{H}^n$, and assume that the
function $\phi$ satisfies \textbf{Property I}. Let $\Omega$ be a
bounded domain with smooth boundary in
 $\mathbb{H}^n$, and let $B_{R}(o)$ be a geodesic ball of radius $R$ and centered at the
 origin $o$
 of
 $\mathbb{H}^{n}$ such that $|\Omega|_{\phi}=|B_{R}(o)|_{\phi}$.
 Then
  \begin{eqnarray*}
\frac{1}{\mu_{1,\phi}(\Omega)}+\frac{1}{\mu_{2,\phi}(\Omega)}+\cdots+\frac{1}{\mu_{n-1,\phi}(\Omega)}+\frac{1}{\mu_{n,\phi}(\Omega)}\geq\frac{n}{\mu_{1,\phi}(B_{R}(o))}.
\end{eqnarray*}
The equality case holds if and only if $\Omega$ is isometric to
$B_{R}(o)$.

\begin{remark}
\rm{Obviously,  Theorems \ref{theo-1} and \ref{theo-2} give a
partial answer to the Ashbaugh-Benguria type conjecture and also
support its validity.
 }
\end{remark}

\begin{remark}
\rm{
 After we posted this paper on arXiv with
 the ID arXiv:2403.08070v2 in March 2024, several colleagues
 communicated with us and thought that maybe the assumption on the
 weighted function $\phi$ is restrictive to get the main conclusions
 given in this paper. So, here we prefer to give again a clear explanation to
 show that the assumption for $\phi$ is necessary and reasonable. In
 fact, if readers have enough patience, they would also find the answer
 in the \textbf{Introduction} part of our another related recent work
 \cite{CM}. Moreover, they could also get the answer through
 carefully checking the details of this paper.

One might have an illusion that since the eigenvalue problem
(\ref{eigen-1}) can be obtained from the classical free membrane
problem (\ref{eigen-2}) by replacing the Laplacian $\Delta$ by the
Witten-Laplacian $\Delta_\phi$, it might be easy and direct to
derive conclusions for eigenvalues of the problem (\ref{eigen-1})
directly from the Neumann eigenvalue problem (\ref{eigen-2}).
Readers would see that this thought is a little bit \textbf{naive}
through only a simple example. In fact, for the free membrane
problem (\ref{eigen-2}), if $\Omega$ is a unit disk in the plane
$\mathbb{R}^2$, it is well-known that the Neumann eigenvalues are
 \begin{eqnarray*}
 (j'_{0,k})^{2}, ~~k\geq 1; \qquad (j'_{n,k})^{2}, ~~n,k\geq 1~~\mathrm{(double~
 eigenvalues)}
 \end{eqnarray*}
whose corresponding eigenfunctions in polar coordinates $(r,\theta)$
are given by
\begin{eqnarray*}
J_{0}(j'_{0,k}r), ~~k\geq 1;  \qquad J_{n}(j'_{n,k}r)\cos n\theta,~
J_{n}(j'_{n,k}r)\sin n\theta, ~~n,k\geq 1,
\end{eqnarray*}
 where $j'_{n,k}$ denotes the $k$-th zero of $J'_{n}$ (i.e., the derivative of the Bessel function
 $J_{n}$).
However, for the eigenvalue problem (\ref{eigen-1}) and under the
assumption that $\Omega$ is a unit disk in $\mathbb{R}^2$, (except
the trivial case $\phi=const.$) it is impossible to compute
\emph{explicitly} all the eigenvalues of the Witten-Laplacian even
if $\phi$ is a radial function.

In the proof of Theorems \ref{theo-1} and \ref{theo-2}, one might
see that the information about the first non-zero Neumann eigenvalue
and its eigenfunctions of the Laplacian on model domains (i.e.,
geodesic balls in space forms) are important for the construction of
desired trial functions. That is to say, in order to get Theorems
\ref{theo-1} and \ref{theo-2} we cannot avoid Lemma \ref{lemma2-1}
in Section \ref{S2}, where one would see that the assumption on the
radial property, the monotonicity and the convexity for the weighted
function $\phi$ is necessary.

 Generally, there do exist some close relationships between geometric
 isoperimetric inequalities and spectral isoperimetric inequalities
 -- readers can immediately get this impression through the
 classical Faber-Krahn inequality. In fact, the Faber-Krahn
 inequality tells us that (see, e.g., \cite[Chapter IV]{IC}):
\begin{itemize}
\item We are given a complete $n$-dimensional ($n\geq2$) Riemannian
manifold $M^{n}$ and for a fixed $\kappa\in\mathbb{R}$, the
complete, simply connected, $n$-dimensional space form
$\mathbb{M}^{n}(\kappa)$ of constant sectional curvature $\kappa$.
To each open set $\Omega$, consisting of a finite disjoint union of
regular domains in $M^{n}$, associate the geodesic disk
$\mathcal{D}$ in $\mathbb{M}^{n}(\kappa)$ satisfying
 \begin{eqnarray}  \label{ADD-1}
 |\Omega|=|\mathcal{D}|.
 \end{eqnarray}
If $\kappa>0$ then only consider those $\Omega$ for which
$|\Omega|<|\mathbb{M}^{n}(\kappa)|$. If, for all such $\Omega$ in
$M^{n}$, equality (\ref{ADD-1}) implies the isoperimetric inequality
 \begin{eqnarray}  \label{ADD-2}
|\partial\Omega|\geq|\partial\mathcal{D}|,
 \end{eqnarray}
 with equality in (\ref{ADD-2}) if and only if $\Omega$ is isometric
 to $\mathcal{D}$, then we also have, for every normal domain
 $\Omega$ in $M^{n}$, that equality (\ref{ADD-1}) implies the
 inequality
  \begin{eqnarray}  \label{ADD-3}
 \lambda_{1}(\Omega)\geq\lambda_{1}(\mathcal{D}),
  \end{eqnarray}
 with equality in (\ref{ADD-3}) if and only if $\Omega$ is isometric
 to $\mathcal{D}$, where $\lambda_{1}(\cdot)$ stands for the first
 Dirichlet eigenvalue of the Laplacian on the prescribed bounded domain.
\end{itemize}
It is well-known that in the Euclidean $n$-space, under the volume
constraint (\ref{ADD-1}) one has the geometric isoperimetric
inequality (\ref{ADD-2}) immediately. Then by the above Faber-Krahn
inequality, one knows:
\begin{itemize}
\item \emph{Among all bounded domains in $\mathbb{R}^n$ having the
same volume, Euclidean balls minimize the first Dirichlet eigenvalue
of the Laplacian.}
\end{itemize}
From the Faber-Krahn inequality, one knows that under the volume
constraint (\ref{ADD-1}), the geometric isoperimetric inequality
(\ref{ADD-2}) makes an important role in the derivation process.
What about the Witten-Laplacian case? Does some weighted geometric
isoperimetric inequality play an important role also? The answer is
affirmative. Now, we prefer to use some content in the
\textbf{Introduction} part of our another work
 \cite{CM} to show again that the assumption on the weighted
 function $\phi$ is necessary and reasonable.  Given a positive function $h$
in $\mathbb{R}^n$, $n\geq2$, one can define the weighted perimeter
and weighted volume of a set $A\subset\mathbb{R}^n$ of locally
finite perimeter as
 \begin{eqnarray*}
 \mathrm{Per}(A)=\int_{\partial A}hd\mathcal{H}^{n-1}, \qquad
 \mathrm{Vol}(A)=\int_{A}hd\mathcal{H}^{n},
 \end{eqnarray*}
where following the usage of notations in \cite{GRC},
$\mathcal{H}^m$ indicates the $m$-dimensional Hausdorff measure, and
$\partial A$ denotes the essential boundary of $A$. Such positive
function $h$ is called a \emph{density} on $\mathbb{R}^n$. If one
fixes a positive weighted volume $m>0$, does there exist a set
$A\subset\mathbb{R}^n$ such that $\mathrm{Vol}(A)=m$ and
 \begin{eqnarray*}
 \mathrm{Per}(A)=\inf\limits_{Q\subset\mathbb{R}^{n},\mathrm{Vol}(Q)=m}\mathrm{Per}(Q)?
 \end{eqnarray*}
Rosales, Ca\~{n}ete, Bayle and Morgan considered this problem and
gave a partial answer that in $\mathbb{R}^n$ with the density
$e^{c|x|^2}$, $c>0$, round balls about the origin uniquely minimize
perimeter for given volume (see \cite[Theorem 5.2]{RCBM}). Moreover,
they showed that for any radial, smooth density $h=e^{f(|x|)}$,
balls around the origin are stable\footnote{~Here ``\emph{stable}"
means that $\mathrm{Per}''(0)\geq0$ under smooth, volume-conserving
variations.} if and only $f$ is convex (\cite[Theorem 3.10]{RCBM}).
This fact motivates the following conjecture (3.12 in their
article), first stated by Kenneth Brakke:
\begin{itemize}
\item (Log-Convex Density Conjecture) \emph{In $\mathbb{R}^n$ with a smooth, radial, log-convex\footnote{~Clearly, for a density $h$ here, the log-convex assumption means $(\log h)''
\geq0$.} density, balls around the origin provide isoperimetric
regions of any given volume.}
\end{itemize}
Chambers \cite[Theorem 1.1]{GRC} recently obtained a recent
breakthrough and gave an answer to the above conjecture as follows:
 \begin{itemize}
 \item  \emph{Given a density $h(x)=e^{f(|x|)}$ on $\mathbb{R}^n$ with $f$ smooth, convex and even, balls around the origin are isoperimetric regions with
respect to weighted perimeter and volume.}
 \end{itemize}
Very recently, using Chambers' above result we have successfully
obtained the Faber-Krahn type inequality for the Witten-Laplcian in
the Euclidean space -- see \cite[Theorem 1.1]{CM} for details. In
our setting of this paper, the density is of the form $e^{-\phi}$,
and then the assumption for the weighted function in Log-Convex
Density Conjecture implies that $\phi$ is radial and
$\left(\log(e^{-\phi})\right)''=-\phi''\geq0$. Based on the
connection of Dirichlet eigenvalues and Neumann eigenvalues, it
should be feasible and reasonable to get the potential spectral
isoperimetric inequalities for the Neumann eigenvalues of the
Witten-Laplacian if the weighted function $\phi$ is radial and
convex. Luckily, this expect has been done here and this is actually
the meaning and the value of our paper. }
\end{remark}

This paper is organized as follows. By suitably constructing trial
functions, we successfully give a proof to Theorems \ref{theo-1} and
\ref{theo-2} in Section \ref{S2}. BTW, since originally the proof of
Theorem \ref{theo-1} is highly similar to that of Theorem
\ref{theo-2}, this leads to the situation that we prefer to unify
those two proofs into a single one, which finally appears as its
present version shown in Section \ref{S2}. A refined result of
Theorem \ref{theo-1} would be given in Section \ref{S3} -- see
Theorem \ref{theo-3} for details.

\section{A proof of Theorems \ref{theo-1} and \ref{theo-2}}
\renewcommand{\thesection}{\arabic{section}}
\renewcommand{\theequation}{\thesection.\arabic{equation}}
\setcounter{equation}{0}  \label{S2}

First, we would like to recall a property of the eigenfunction
corresponding to the first nonzero Neumann eigenvalue of the
Witten-Laplacian on geodesic balls (in space forms) if the function
$\phi$ is radial w.r.t. some chosen point. This property has been
carefully proven in \cite[Appendix]{CM}, and readers can check all
the details therein.

\begin{lemma} \label{lemma2-1}(\cite[Theorem 4.1]{CM})
Assume that $B_{R}(o)$ is a geodesic ball of radius $R$ and centered
at some point $o$ in the $n$-dimensional complete simply connected
Riemannian manifold $\mathbb{M}^{n}({\kappa})$ with constant
sectional curvature $\kappa\in\{-1,0,1\}$, and that  $\phi$ is a
radial function w.r.t. the distance parameter $t:=d(o,\cdot)$, which
is also a non-increasing convex function. Then the eigenfunctions of
the first nonzero Neumann eigenvalue $\mu_{1,\phi}(B_{R}(o))$ of the
Witten-Laplacian
 on $B_{R}(o)$ have the form $T(t)\frac{x_i}{t}$,
 $i=1,2,\cdots,n$, where $T(t)$
satisfies
\begin{eqnarray}  \label{2-1}
\left\{
\begin{array}{ll}
T''+\left(\frac{(n-1)C_{\kappa}}{S_{\kappa}}-\phi'\right)T'+\left(\mu_{1,\phi}(B_{R}(o))-(n-1)S_{\kappa}^{-2}\right)T=0,\\[1mm]
T(0)=0,~T'(R)=0,~T'|_{[0,R)}\neq0.
\end{array}
\right.
\end{eqnarray}
 Here $C_{\kappa}(t)=\left(S_{\kappa}(t)\right)'$ and
 \begin{eqnarray*}
S_{\kappa}(t)=\left\{
\begin{array}{lll}
\sin t,~~&\mathrm{if}~\mathbb{M}^{n}(\kappa)=\mathbb{S}^{n}_{+}, \\
t,~~&\mathrm{if}~\mathbb{M}^{n}(\kappa)=\mathbb{R}^{n},\\
\sinh t,~~&\mathrm{if}~\mathbb{M}^{n}(\kappa)=\mathbb{H}^{n},
\end{array}
\right.
 \end{eqnarray*}
  with $\mathbb{S}^{n}_{+}$ the $n$-dimensional hemisphere of radius $1$.
\end{lemma}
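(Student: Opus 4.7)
My approach is to diagonalize $\Delta_\phi$ on $B_R(o)$ by separation of variables in geodesic polar coordinates $(t,\theta)$ centered at $o$. Because $\phi$ depends only on $t$ and the ball is rotationally symmetric about $o$, $\Delta_\phi$ commutes with the isotropy group $O(n)$ at $o$, so each eigenspace splits according to the spherical harmonic decomposition $L^{2}(\mathbb{S}^{n-1})=\bigoplus_{k\geq 0}\mathcal{H}_{k}$, where $-\Delta_{\mathbb{S}^{n-1}}$ acts as $k(k+n-2)\,\mathrm{Id}$ on $\mathcal{H}_{k}$. For an eigenfunction of the form $u=f(t)Y_{k}(\theta)$, using the standard polar expression
$$\Delta u=\partial_{t}^{2}u+(n-1)\frac{C_{\kappa}}{S_{\kappa}}\partial_{t}u+\frac{1}{S_{\kappa}^{2}}\Delta_{\mathbb{S}^{n-1}}u$$
together with $\langle\nabla\phi,\nabla u\rangle=\phi'(t)\,\partial_{t}u$, the eigenvalue equation $\Delta_{\phi}u+\mu u=0$ reduces to the Sturm--Liouville ODE
$$f''+\left((n-1)\frac{C_{\kappa}}{S_{\kappa}}-\phi'\right)f'+\left(\mu-\frac{k(k+n-2)}{S_{\kappa}^{2}}\right)f=0$$
on $(0,R)$, together with regularity at $t=0$ and $f'(R)=0$ imposed by the Neumann condition.

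For $k=1$ the angular factors are exactly the coordinate restrictions $x_{i}/t$, $i=1,\dots,n$, and the radial ODE is precisely \eqref{2-1}. The requirement that $u=T(t)x_{i}/t$ extend smoothly across the origin forces $T(0)=0$, while the Neumann condition on $\partial B_{R}(o)$ is equivalent to $T'(R)=0$. The heart of the proof is then to identify $\mu_{1,\phi}(B_{R}(o))$ with the smallest positive eigenvalue in the $k=1$ sector rather than in any other sector. For $k\geq 2$ this is comparatively easy, because the extra nonnegative term $\bigl(k(k+n-2)-(n-1)\bigr)S_{\kappa}^{-2}$ appearing in the weighted Rayleigh quotient raises the minimum strictly, by a standard Sturm-type monotonicity argument applied to the self-adjoint form of the above ODE with weight $S_{\kappa}^{n-1}e^{-\phi}$.

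The delicate step, which I expect to be the main obstacle, is ruling out that the first positive eigenvalue comes instead from the purely radial $k=0$ sector, where the first nontrivial eigenfunction is a radial solution of the same ODE with the angular term removed. This is exactly where the hypotheses that $\phi$ be non-increasing and convex enter: after writing the two radial problems in self-adjoint form with weight $S_{\kappa}^{n-1}e^{-\phi}$, one would estimate the associated weighted Rayleigh quotients using $\phi'\leq 0$ and $\phi''\geq 0$ to conclude that the first $k=1$ eigenvalue does not exceed the first nontrivial $k=0$ eigenvalue. Once the $k=1$ identification is in hand, the assertion $T'|_{[0,R)}\neq 0$ follows by recognizing $T$ as the ground state of the weighted $k=1$ Sturm--Liouville problem: its ground state has no interior zero on $(0,R]$, and together with $T(0)=0$ and $T'(R)=0$ this forces $T$ to be strictly monotone on $[0,R)$, so that $T'$ is of one sign there.
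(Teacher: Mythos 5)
The paper itself does not prove this lemma: it states it and defers entirely to \cite[Appendix]{CM}, so there is no internal proof to compare against. That said, your separation-of-variables outline in geodesic polar coordinates is the natural route and almost certainly the one taken in \cite{CM}. Your derivation of the radial ODE is correct, the identification of $T(0)=0$ from regularity at the origin and $T'(R)=0$ from the Neumann condition is right, and the Rayleigh-quotient monotonicity argument for $k\geq2$ versus $k=1$ is sound, since those sectors share the boundary condition $T(0)=0$ and the effective potential $k(k+n-2)/S_{\kappa}^{2}$ is strictly increasing in $k$.

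There are, however, two genuine gaps, both at points you yourself flag. First, the $k=0$ versus $k=1$ comparison. You correctly note this is the main obstacle and that the hypotheses $\phi'\leq0$, $\phi''\geq0$ must enter here, but you do not supply the argument. The difficulty is real: the $k=0$ radial problem lives on functions with $g'(0)=0$ orthogonal (in the weight $S_{\kappa}^{n-1}e^{-\phi}$) to constants, while the $k=1$ problem lives on functions with $T(0)=0$; the admissible classes are different, so no naive Sturm monotonicity applies, and one needs an actual comparison (a test-function construction or an integration-by-parts/Wronskian argument) making quantitative use of the sign conditions on $\phi'$ and $\phi''$. Second, the claim $T'|_{[0,R)}\neq0$. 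You derive it from ``the ground state has no interior zero on $(0,R]$,'' but that statement concerns $T$, not $T'$; positivity of $T$ on $(0,R]$ together with $T(0)=0$ and $T'(R)=0$ does not by itself rule out interior critical points. A separate ODE argument is required: e.g., supposing $T'(t_{1})=0$ at some interior $t_{1}$ and using the self-adjoint form $(S_{\kappa}^{n-1}e^{-\phi}T')'+\left(\mu-\frac{n-1}{S_{\kappa}^{2}}\right)S_{\kappa}^{n-1}e^{-\phi}T=0$ together with sign information on $\mu-\frac{n-1}{S_{\kappa}^{2}}$ to reach a contradiction. This is in the same spirit as, but not a consequence of, Lemma \ref{lemma2-3} in the present paper, which concerns the monotonicity of $f/S_{\kappa}$ rather than $f$ itself. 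Until these two steps are carried out, the proposal is a correct skeleton but not a proof.
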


\begin{remark}
\rm{ From \cite[Appendix]{CM}, it is not hard to know that $x_{i}$,
$i=1,2,\cdots,n$, are coordinate functions of the globally defined
orthonormal coordinate system set up in $\mathbb{M}^{n}(\kappa)$. }
\end{remark}

 \textbf{\emph{A proof of Theorems \ref{theo-1} and
\ref{theo-2}}}. Due to the fact that $\phi$ is radial w.r.t. $o$,
one can define a radial function $f$ as follows
 \begin{eqnarray}  \label{f-add}
f(t)=\left\{
\begin{array}{ll}
T(t),~~&\mathrm{if} ~0\leq t \leq R,\\
T(R),~~&\mathrm{if} ~t> R,
\end{array}
\right.
 \end{eqnarray}
where $R$ is the radius of the (geodesic) ball $B_{R}(o)$ satisfying
the volume constraint $|\Omega|_{\phi}=|B_{R}(o)|_{\phi}$. The
origin $o$ would be chosen as follows: in fact, by the Brouwer fixed
point theorem and using a similar argument to that of Weinberger
given in \cite{HFW}, one can always choose a suitable origin
$o\in\mathrm{hull}(\Omega)$ such that
\begin{eqnarray} \label{2-2}
\int_\Omega f(t) \frac{x_i}{t} d\eta=0, \qquad i=1,2,\cdots,n.
\end{eqnarray}
Denote by $\{e_{1},e_{2},\cdots,e_{n}\}$ the orthonormal basis (of
$\mathbb{R}^n$ or $\mathbb{H}^n$) corresponding to the coordinates
$x_{1},x_{2},\cdots,x_{n}$. Then (\ref{2-2}) can be rewritten as
\begin{eqnarray} \label{2-3}
\int_\Omega \langle x,e_{i}\rangle \frac{f(t)}{t} d\eta=0, \qquad
i=1,2,\cdots,n,
\end{eqnarray}
 with $\langle\cdot,\cdot\rangle$ denoting the inner product. Denote
 by $u_{i}$ the eigenfunction corresponding to the $i$-th Neumann
 eigenvalue $\mu_{i,\phi}$ of the eigenvalue problem
 (\ref{eigen-1}). Then (\ref{2-3})
 implies that
 $$\langle x,e_{i}\rangle \frac{f(t)}{t}\perp u_{0}$$
 in the sense of $L^{2}$-norm w.r.t. the weighted density $d\eta$.
 Our purpose now is to construct suitable trial function $\psi_{i}$
 for the eigenvalue $\mu_{i,\phi}$ such that $\psi_{i}$ is orthogonal to the preceding
 eigenfunctions $u_{0},u_{1},\cdots,u_{i-1}$. That is to say,
 $\psi_{i}\perp\mathrm{span}\{u_{0},u_{1},\cdots,u_{i-1}\}$ in the sense of $L^{2}$-norm w.r.t. the weighted density
 $d\eta$. Define an $n\times n$ matrix $Q=(q_{ij})_{n\times n}$ with
 $q_{ij}$ given by
  \begin{eqnarray*}
q_{ij}:=\int_\Omega \langle x,e_{i}\rangle \frac{f(t)}{t}u_{j}
d\eta, \qquad i,j=1,2,\cdots,n.
  \end{eqnarray*}
Using the orthogonalization of Gram and Schmidt (QR-factorization
theorem), one knows that there exist an upper triangle matrix
$\mathcal{M}=(\mathcal{M}_{ij})_{n\times n}$ and an orthogonal
matrix $U=(a_{ij})_{n\times n}$ such that $\mathcal{M}=UQ$, which
implies
\begin{eqnarray*}
\mathcal{M}_{ij}=\sum\limits_{k=1}^{n}a_{ik}q_{kj}=\int_{\Omega}a_{ik}\langle
x,e_{k}\rangle \frac{f(t)}{t}u_{j}d\eta, \qquad 1\leq j<i\leq n.
\end{eqnarray*}
 Set $e'_{i}=\sum_{k=1}^{n}a_{ik}e_{k}$, $i=1,2,\cdots,n$, and then
\begin{eqnarray} \label{2-5}
\int_\Omega \langle x,e'_{i}\rangle \frac{f(t)}{t}u_{j} d\eta=0
\end{eqnarray}
holds for $j=1,2,\cdots,i-1$ and $i=2,3,\cdots,n$. BTW, it is easy
to see that $\{e'_{1},e'_{2},\cdots,e'_{n}\}$ is also an orthonormal
basis (of $\mathbb{R}^n$ or $\mathbb{H}^n$), which is actually
formed by making an orthogonal transformation to the orthonormal
basis $\{e_{1},e_{2},\cdots,e_{n}\}$. Denote by
$y_{1},y_{2},\cdots,y_{n}$ the coordinate functions corresponding to
the basis $\{e'_{1},e'_{2},\cdots,e'_{n}\}$, that is, $y_{i}=\langle
x,e'_{i}\rangle$. Then from (\ref{2-5}) one has
  \begin{eqnarray}  \label{2-6}
\int_\Omega y_{i}\frac{f(t)}{t}u_{j} d\eta=0, \qquad
j=1,2,\cdots,i-1~ \mathrm{and} ~i=2,3,\cdots,n.
  \end{eqnarray}
 For convention and by the abuse of notations, we prefer to use
 $x_{i}$ as coordinate functions -- based on this, we still write
 $y_{i}$ as $x_{i}$, $i=2,3,\cdots,n$. Then in this setting,
 (\ref{2-6}) can be rewritten as
\begin{eqnarray}  \label{2-6-1}
\int_\Omega x_{i}\frac{f(t)}{t}u_{j} d\eta=0, \qquad
j=1,2,\cdots,i-1~ \mathrm{and} ~i=2,3,\cdots,n.
  \end{eqnarray}
Together with (\ref{2-2}) and (\ref{2-6-1}), one has that there
exists an orthonormal basis $\{e_{1},e'_{2},e'_{3},\cdots,e'_{n}\}$
such that the coordinate functions $x_{1},x_{2},\cdots,x_{n}$
corresponding to this basis satisfy
 \begin{eqnarray} \label{2-7}
\int_\Omega x_{i}\frac{f(t)}{t}u_{j} d\eta=0, \qquad
j=0,1,2,\cdots,i-1~ \mathrm{and} ~i=1,2,3,\cdots,n.
 \end{eqnarray}
Here the eigenfunction $u_{0}$ of the eigenvalue $\mu_{0,\phi}$ can
be chosen as $u_{0}=1/\sqrt{|\Omega|_{\phi}}$. Set in (\ref{2-7})
that
 \begin{eqnarray*}
 \psi_{i}:=x_{i}\frac{f(t)}{t}, \qquad i=1,2,3,\cdots,n,
 \end{eqnarray*}
and then one has
 \begin{eqnarray}  \label{Tadd}
\int_\Omega \psi_{i}u_{j} d\eta=0, \qquad j=0,1,2,\cdots,i-1~
\mathrm{and} ~i=1,2,3,\cdots,n.
 \end{eqnarray}
Hence, our purpose of constructing trial functions $\psi_{i}$,
$i=1,2,3,\cdots,n$, has been achieved. To prove our main
conclusions, we also need the following fact.

\begin{lemma} \label{lemma2-3}
The function $\frac{f(t)}{S_{\kappa}(t)}$ is monotone decreasing in
the bounded domain $\Omega$ with smooth boundary in $\mathbb{R}^n$
(or $\mathbb{H}^n$).
\end{lemma}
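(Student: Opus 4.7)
The plan is to split the radial range into $[0,R]$ and $(R,\infty)$ and check monotonicity on each piece. On the outer part $(R,\infty)$ the function $f$ is the constant $T(R)>0$ while $S_\kappa$ is strictly increasing (since $S_\kappa'=C_\kappa>0$ for $\kappa\in\{0,-1\}$), so $f/S_\kappa$ is automatically strictly decreasing there. The substance of the lemma lies on $[0,R]$, where I would set $g(t):=T(t)/S_\kappa(t)$. Substituting $T=gS_\kappa$ into (\ref{2-1}) and using $C_\kappa'=-\kappa S_\kappa$ together with the identity $C_\kappa^2-1=-\kappa S_\kappa^2$ (which precisely cancels the singular pair $(n-1)C_\kappa^2/S_\kappa^2-(n-1)/S_\kappa^2$), the ODE collapses into
\[
g''+A(t)\,g'+B(t)\,g=0,
\]
with
\[
A(t)=\frac{(n+1)C_\kappa}{S_\kappa}-\phi',\qquad B(t)=\mu_{1,\phi}(B_{R}(o))-n\kappa-\phi'\,\frac{C_\kappa}{S_\kappa}.
\]

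Both $A$ and $B$ are strictly positive on $(0,R]$: by \textbf{Property I} one has $\phi'\le 0$, while $C_\kappa, S_\kappa>0$ for $\kappa\in\{0,-1\}$ and $-n\kappa\ge 0$, giving $A>0$ and $B\ge \mu_{1,\phi}(B_{R}(o))>0$. I would next pin down the sign of $g$ and the boundary values of $g'$. Normalizing $T$ so that $T'(0)>0$, the requirement $T'|_{[0,R)}\neq 0$ from Lemma~\ref{lemma2-1} forces $T'>0$ on $[0,R)$, hence $T>0$ and $g>0$ on $(0,R]$. At the right endpoint, $T'(R)=0$ gives $g'(R)=-T(R)C_\kappa(R)/S_\kappa(R)^2<0$. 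At the left endpoint, Taylor expanding $T(t)=T'(0)t+\tfrac12 T''(0)t^2+\cdots$ and matching the constant-order terms in (\ref{2-1}) yields $T''(0)=\tfrac{2\phi'(0)}{n+1}T'(0)\le 0$; a matching expansion of $g=T/S_\kappa$ then gives $g'(0)=\tfrac12 T''(0)\le 0$.

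The conclusion now follows by a one-line maximum principle. If $g'$ were positive somewhere on $[0,R]$, it would attain a positive maximum at an interior point $t_0\in(0,R)$ (since the endpoint values are nonpositive), where necessarily $g''(t_0)=0$, and the ODE would force $A(t_0)g'(t_0)+B(t_0)g(t_0)=0$ — impossible, since each factor is strictly positive. Hence $g'\le 0$ on $[0,R]$, and this glues to the outer piece at $t=R$ to give monotonicity of $f/S_\kappa$ throughout $\Omega$. The only delicate point is arranging the simultaneous positivity of $A$, $B$ and the sign $g'(0)\le 0$; all three rely essentially on the non-increasing hypothesis for $\phi$ from \textbf{Property I} and on the non-positivity $\kappa\le 0$ of the ambient curvature, so the argument is specific to the Euclidean and hyperbolic settings.
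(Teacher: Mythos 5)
Your proof is correct and uses essentially the same maximum-principle mechanism as the paper's: both establish nonpositive boundary values for the derivative of $f/S_{\kappa}$ (the paper works with $h:=f'-\frac{C_{\kappa}}{S_{\kappa}}f=S_{\kappa}\,g'$), posit a positive interior maximum of that derivative, and reach a contradiction by plugging the vanishing-derivative condition back into the ODE (\ref{2-1}). Your substitution $g=T/S_{\kappa}$ is a mild streamlining of the same idea, making the positivity of the coefficients $A$ and $B$ explicit so the contradiction falls out in one line rather than through the paper's algebra at the critical point.
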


\begin{proof}
By (\ref{2-1}) and the definition of the function $f$, we observe
first that
 \begin{eqnarray*}
\lim_{t\rightarrow0}\frac{f(t)}{S_{\kappa}(t)}=f'(0).
 \end{eqnarray*}
Without loss of generality, we may assume $f>0$. Since
 \begin{eqnarray*}
 \frac{d}{dt}\left(\frac{f(t)}{S_{\kappa}(t)}\right)=\frac{f'(t)-\frac{C_{\kappa}(t)}{S_{\kappa}(t)}f(t)}{S_{\kappa}(t)},
\end{eqnarray*}
 similarly, one has
 \begin{eqnarray*}
 \lim_{t\rightarrow0}\left(f'(t)-\frac{C_{\kappa}(t)}{S_{\kappa}(t)}f(t)\right)=0, \qquad~f'(R)-\frac{C_{\kappa}(R)}{S_{\kappa}(R)}f(R)<0.
\end{eqnarray*}
If there exists a point $t_0$ such that
$f'(t_0)-\frac{C_{\kappa}(t_0)}{S_{\kappa}(t_0)}f(t_0) > 0$, then
there exists $t_1$ such that
\begin{eqnarray*}
f'(t_1)-\frac{C_{\kappa}(t_1)}{S_{\kappa}(t_1)}f(t_1)>0,
\end{eqnarray*}
\begin{eqnarray} \label{2-9}
\frac{d}{dt}\left(f'(t)-\frac{C_{\kappa}(t)}{S_{\kappa}(t)}f(t)\right)(t_1)=0.
\end{eqnarray}
 Combining the first equation in (\ref{2-1}) and (\ref{2-9}) yields at point $t_1$ that
  \begin{eqnarray} \label{2-10}
-\frac{nC_{\kappa}}{S_{\kappa}}f'-\mu_{1,\phi}f+\phi'f'+\frac{nf}{S_{\kappa}^2}=0.
  \end{eqnarray}
 Therefore, due to $\phi'\leq0$, it follows from (\ref{2-10}) that
\begin{eqnarray*}
\left(f'-\frac{f}{C_{\kappa} S_{\kappa}}\right)(t_1)\leq 0.
\end{eqnarray*}
So, we have
\begin{eqnarray*}
\left(f'-\frac{ C_{\kappa}}{ S_{\kappa}}f\right)(t_1)&\leq& \left(\frac{f}{S_{\kappa} C_{\kappa}}-\frac{fC_{\kappa}}{S_{\kappa}}\right)(t_1)\\
&=&\left(\frac{f(1-C_{\kappa}^2)}{S_{\kappa} C_{\kappa}}\right)(t_1) \\
&\leq& 0.
\end{eqnarray*}
This is contradict with $\left(f'-\frac{ C_{\kappa}}{
S_{\kappa}}f\right)(t_1)> 0$. Hence, we have
$\frac{d}{dt}(\frac{f(t)}{S_{\kappa}(t)}) < 0$, and then
$\frac{f(t)}{S_{\kappa}(t)}$ is monotone decreasing.
\end{proof}

By the characterization (\ref{chr-1}) and (\ref{Tadd}), one can
obtain
 \begin{eqnarray} \label{2-11}
\mu_{i,\phi}(\Omega)\int_{\Omega} f^2\frac{x_i^2}{t^2} d\eta \leq
\int_{\Omega}\left(f'^2\frac{x_i^2}{t^2} + f^2
\left|\overline{\nabla} \left(\frac{x_i}{t}\right)\right|^2
S_{\kappa}^{-2}(t)\right)d\eta,
 \end{eqnarray}
where $\overline{\nabla}$ is the gradient operator defined on the
unit $(n-1)$-sphere $\mathbb{S}^{n-1}$. By a direct calculation to
(\ref{2-11}), one has
\begin{eqnarray} \label{2-12}
 \int_{\Omega} f^2\frac{x_i^2}{t^2} d\eta
&\leq &\frac{1}{\mu_{i,\phi}(\Omega)} \int_{\Omega}(f')^2\frac{x_i^2}{t^2}d\eta +\frac{1}{\mu_{i,\phi}(\Omega)} \int_{\Omega}f^2 \left|\overline{\nabla}\left(\frac{x_i}{t}\right)\right|^2 S_{\kappa}^{-2}(t) d\eta\nonumber\\
&=&  \frac{1}{\mu_{i,\phi}(\Omega)}  \int_{\Omega \cap B_{R}(o)} (f')^2\frac{x_i^2}{t^2}d\eta +\frac{1}{\mu_{i,\phi}(\Omega)} \int_{\Omega}f^2 \left|\overline{\nabla}\left(\frac{x_i}{t}\right)\right|^2 S_{\kappa}^{-2}(t)d\eta\nonumber\\
&\leq&  \frac{1}{\mu_{i,\phi}(\Omega)}  \int_{B_{R}(o)} (f')^2\frac{x_i^2}{t^2}d\eta +\frac{1}{\mu_{i,\phi}(\Omega)} \int_{\Omega}f^2 \left|\overline{\nabla} \frac{x_i}{t}\right|^2 S_{\kappa}^{-2}(t)d\eta\nonumber\\
&=&  \frac{1}{\mu_{i,\phi}(\Omega)} \frac{1}{n} \int_0^R \int_{\mathbb{S}^{n-1}(1)} (f')^2 S_{\kappa}^{n-1}(t) ~e^{-\phi} dS dt \nonumber\\
 && \qquad \qquad +\frac{1}{\mu_{i,\phi}(\Omega)} \int_{\Omega}f^2 \left|\overline{\nabla}\left(\frac{x_i}{t}\right)\right|^2S_{\kappa}^{-2}(t)d\eta\nonumber\\
&=&  \frac{1}{\mu_{i,\phi}(\Omega)} \frac{1}{n} \int_{B_{R}(o)}
(f')^2 d\eta +\frac{1}{\mu_{i,\phi}(\Omega)} \int_{\Omega}f^2
\left|\overline{\nabla} \left(\frac{x_i}{t}\right)\right|^2
S_{\kappa}^{-2}(t) d\eta,
\end{eqnarray}
where $dS$ stands for the volume element on the $(n-1)$-sphere
$\mathbb{S}^{n-1}(1)$ of radius $1$. By \cite{XW}, one knows
 \begin{eqnarray} \label{2-13}
\sum\limits_{i=1}^{n}\frac{1}{\mu_{i,\phi}(\Omega)}
\left|\overline{\nabla}\left(\frac{x_i}{r}\right)\right|^2 \leq
\sum\limits_{i=1}^{n-1} \frac{1}{\mu_{i,\phi}(\Omega)}.
 \end{eqnarray}
Therefore, combining (\ref{2-12}) with (\ref{2-13}), and then doing
summation over the index $i$ from $1$ to $n$, we can obtain
 \begin{eqnarray}  \label{2-14}
\int_{\Omega} f^2 d\eta\leq\sum_{i=1}^n\frac{1}{n
\mu_{i,\phi}(\Omega)} \int_{B_{R}(o)} (f')^2 d\eta +\sum_{i=1}^{n-1}
\frac{1}{\mu_{i,\phi}(\Omega)} \int_{\Omega}f^2 S_{\kappa}^{-2}(t)
d\eta.
 \end{eqnarray}
On the other hand, still from (\ref{2-13}), one has
 \begin{eqnarray*}
\sum_{i=1}^n \frac{1}{\mu_{n,\phi}(\Omega)} \left|\overline{\nabla}
\left(\frac{x_i}{r}\right)\right|^2 \leq \sum_{i=1}^n
\frac{1}{\mu_{i,\phi}(\Omega)} \left|\overline{\nabla}
\left(\frac{x_i}{r}\right)\right|^2 \leq \sum_{i=1}^{n-1}
\frac{1}{\mu_{i,\phi}(\Omega)},
 \end{eqnarray*}
 which implies
  \begin{eqnarray*}
\frac{1}{n \mu_{n,\phi}(\Omega)} \leq
\frac{1}{n(n-1)}\sum_{i=1}^{n-1} \frac{1}{\mu_{i,\phi}(\Omega)}.
\end{eqnarray*}
Substituting the above inequality into (\ref{2-14}) results in
 \begin{eqnarray}   \label{2-15}
\int_\Omega f^2(t) d\eta \leq \sum_{i=1}^{n-1} \frac{1}{(n-1)
\mu_{i,\phi}(\Omega)} \left[\int_{B_{R}(o)} (f')^2(t) d\eta +
\int_\Omega (n-1)f^2(t)S_{\kappa}^2(t) d\eta\right].
 \end{eqnarray}
Applying Lemma \ref{lemma2-3} and \cite[Lemma 4.4 and Appendix]{CM},
one has
\begin{eqnarray*}
\int_\Omega f^2(t) d\eta \geq \int_{B_{R}(o)} f^2(t) d\eta, \qquad
\int_\Omega \frac{f^2(t)}{S_{\kappa}^2(t)} d\eta \leq
\int_{B_{R}(o)} \frac{f^2(t)}{S_{\kappa}^2(t)} d\eta.
\end{eqnarray*}
Putting the above fact into (\ref{2-15}), we have
\begin{eqnarray*}
\frac{1}{n-1}\sum_{i=1}^{n-1}\frac{1}{\mu_{i,\phi}(\Omega)} \geq
\frac{ \int_{B_{R}(o)} f^2(t) d\eta}{\int_{B_{R}(o)}
\left[(f')^2+(n-1)\frac{f^2(t)}{S_{\kappa}^2(t)}\right] d\eta}
=\frac{1}{\mu_{1,\phi}(B_{R}(o))},
\end{eqnarray*}
which implies (\ref{II-1}) or (\ref{II-1-1}) directly. This
completes the proof of Theorems \ref{theo-1} and \ref{theo-2}.

\section{A sharper estimate}
\renewcommand{\thesection}{\arabic{section}}
\renewcommand{\theequation}{\thesection.\arabic{equation}}
\setcounter{equation}{0}  \label{S3}

In the last section, we would like to give a shaper estimate (for
the sum of the reciprocals of the first $(n-1)$ nonzero Neumann
eigenvalues of the Witten-Laplacian on bounded domains in
$\mathbb{R}^n$) than (\ref{II-1}) shown in Theorem \ref{theo-1}. In
fact, we can prove:

\begin{theorem} \label{theo-3}
Assume that $\Omega$ is a bounded domain in $\mathbb{R}^n$ with
smooth boundary, and that the function $\phi$ satisfies
\textbf{Property I}. Then
 \begin{eqnarray}  \label{3-1}
&&\mu_{1,\phi}(B_{R}(o))-\frac{n-1}{\frac{1}{\mu_{1,\phi}(\Omega)}+\frac{1}{\mu_{2,\phi}(\Omega)}+\cdots+\frac{1}{\mu_{n-1,\phi}(\Omega)}}\nonumber\\
 &&\qquad \qquad \geq \frac{\int_{B_{R}(o)\setminus B_1}\left[(f')^2+(n-1)\frac{f^2}{t^2}\right]d\eta - f^2(R)\int_{B_2\setminus
 B_{R}(o)}\left[(n-1)\frac{1}{t^2}\right]d\eta}{\int_{B_{R}(o)}f^2d\eta},
 \qquad
 \end{eqnarray}
  where (as in Theorem \ref{theo-1}) $B_{R}(o)$ is a ball of radius $R$ and centered at the
 origin $o$
 of
 $\mathbb{R}^{n}$ such that $|\Omega|_{\phi}=|B_{R}(o)|_{\phi}$, $f$ is the function defined by (\ref{f-add}), and $B_1$, $B_2$ are two balls
 centered
  at the origin $o$ and satisfying $|B_1|_\phi=|\Omega\cap B_{R}(o)|_\phi$, $|B_2\setminus B_{R}(o)|_\phi=|\Omega\setminus
  B_{R}(o)|_\phi$, respectively. The equality in (\ref{3-1}) holds
  if and only if $\Omega$ is the ball $B_{R}(o)$.
\end{theorem}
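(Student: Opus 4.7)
The plan is to adapt the proof of Theorem \ref{theo-1} by replacing the two coarse rearrangement bounds used at the very end of that proof with quantitative versions tracking the annular discrepancies $B_{R}(o) \setminus B_1$ and $B_2 \setminus B_{R}(o)$, and then repackaging the surplus using the Rayleigh identity for the radial first Neumann eigenfunction on $B_{R}(o)$. Concretely, I would run the argument of Section \ref{S2} verbatim up to and including the key inequality (\ref{2-15}),
\[
\int_\Omega f^2 \, d\eta \;\leq\; \frac{S}{n-1} \int_{B_{R}(o)} (f')^2 \, d\eta + S \int_\Omega \frac{f^2}{t^2} \, d\eta, \qquad S := \sum_{i=1}^{n-1} \frac{1}{\mu_{i,\phi}(\Omega)},
\]
whose derivation depends on $\Omega$ only through its weighted volume.

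Next I would sharpen the two radial estimates used immediately after (\ref{2-15}) in the proof of Theorem \ref{theo-1}. Decomposing $\Omega = (\Omega \cap B_{R}(o)) \cup (\Omega \setminus B_{R}(o))$ and using that $f$ is nondecreasing (so $f^2$ is nondecreasing), that $f \equiv f(R)$ on $\Omega \setminus B_{R}(o)$, that $f^2/t^2$ is nonincreasing by Lemma \ref{lemma2-3}, and that $1/t^2$ is nonincreasing, the weighted Hardy--Littlewood rearrangement inequality yields
\[
\int_\Omega f^2 \, d\eta \;\geq\; \int_{B_1} f^2 \, d\eta + f^2(R) |B_2 \setminus B_{R}(o)|_\phi, \qquad \int_\Omega \frac{f^2}{t^2} \, d\eta \;\leq\; \int_{B_1} \frac{f^2}{t^2} \, d\eta + f^2(R) \int_{B_2 \setminus B_{R}(o)} \frac{1}{t^2} \, d\eta.
\]

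I would then substitute these two bounds into (\ref{2-15}) and invoke the Rayleigh identity $\mu_{1,\phi}(B_{R}(o)) \int_{B_{R}(o)} f^2 \, d\eta = \int_{B_{R}(o)} [(f')^2 + (n-1) f^2/t^2] \, d\eta$, which is obtained by multiplying the ODE (\ref{2-1}) (taking $\kappa = 0$) by $T \, e^{-\phi} t^{n-1}$, integrating over $[0,R]$, and using $T(0) = 0$ and $T'(R) = 0$. After decomposing each $B_{R}(o)$-integral as $\int_{B_1} + \int_{B_{R}(o) \setminus B_1}$ and regrouping, the cancellations should reproduce (\ref{3-1}). For the equality case, equality in (\ref{3-1}) forces equality in each of the rearrangement steps above, which in turn forces $\Omega \cap B_{R}(o) = B_1$ and $\Omega \setminus B_{R}(o) = B_2 \setminus B_{R}(o)$ up to $\phi$-null sets; combined with $|\Omega|_\phi = |B_{R}(o)|_\phi$, this pins down $\Omega = B_{R}(o)$.

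The main obstacle is recovering the $\int_{B_{R}(o) \setminus B_1} (f')^2 \, d\eta$ contribution that appears inside the numerator of (\ref{3-1}): a naive substitution of the refined $f^2/t^2$ bound alone produces only the $(n-1) f^2/t^2$ piece on $B_{R}(o) \setminus B_1$. Closing this gap will require combining the refined $f^2$ lower bound with a further integration by parts of the ODE (\ref{2-1}) on the annulus $[r_1, R]$, using the boundary datum $T'(R) = 0$ to convert the surplus $\int_{B_{R}(o) \setminus B_1}(f^2(R) - f^2) \, d\eta$ coming from that $f^2$ bound into a $\int_{B_{R}(o) \setminus B_1} (f')^2 \, d\eta$ term.
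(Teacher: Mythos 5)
The step you call an obstacle is in fact where your plan and the paper's proof diverge in a material way. The paper does \emph{not} re-run the Section~2 argument up to (\ref{2-15}). Instead, it begins from a genuinely sharper starting inequality, namely (\ref{3-2}),
\begin{equation*}
\frac{n-1}{\sum_{i=1}^{n-1}\mu_{i,\phi}^{-1}}\int_\Omega f^2\,d\eta \;\leq\; \int_\Omega \left[(f')^2 + (n-1)\frac{f^2}{t^2}\right]d\eta,
\end{equation*}
in which the $(f')^2$ term is integrated over $\Omega$, not over $B_R(o)$. This is obtained directly from the observation $(f')^2 \leq f^2/t^2$ (a consequence of Lemma~\ref{lemma2-3}) together with the Xia--Wang summation trick that rearranges $\sum_i \mu_{i,\phi}^{-1}|\nabla\psi_i|^2$ using $|\overline{\nabla}(x_i/t)|^2 = 1 - x_i^2/t^2$; it is strictly stronger than (\ref{2-15}) because $\int_\Omega(f')^2\,d\eta \leq \int_{B_R(o)}(f')^2\,d\eta$ (as $f'\equiv 0$ outside $B_R(o)$). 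With (\ref{3-2}) in hand, the paper subtracts it from the Rayleigh identity (\ref{3-4}) to obtain (\ref{3-5}), and then applies rearrangement to the single quantity $G:=(f')^2 + (n-1)f^2/t^2$ as a whole (using that $G$ is radially nonincreasing on $[0,R]$ and that $f'\equiv 0$, $f\equiv f(R)$ on $t>R$), producing exactly $\int_{B_R(o)\setminus B_1} G\,d\eta - f^2(R)\int_{B_2\setminus B_R(o)} (n-1)t^{-2}\,d\eta$ with no leftover term.

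If you instead start from (\ref{2-15}) as you propose, you will (as you yourself anticipated) arrive at
\begin{equation*}
\frac{n-1}{S}\int_{B_R(o)} f^2 d\eta \;\leq\; \int_{B_R(o)}(f')^2 d\eta + (n-1)\int_{B_1}\frac{f^2}{t^2} d\eta + (n-1)f^2(R)\int_{B_2\setminus B_R(o)}\frac{1}{t^2}d\eta,
\end{equation*}
whereas what is required is the same estimate with $\int_{B_1}(f')^2 d\eta$ in place of $\int_{B_R(o)}(f')^2 d\eta$. The missing amount is precisely $\int_{B_R(o)\setminus B_1}(f')^2 d\eta$. Your suggestion for recovering it --- integrating the ODE (\ref{2-1}) by parts on $[r_1,R]$ to convert the surplus $\int_{B_R(o)\setminus B_1}(f^2(R)-f^2)\,d\eta$ into $\int_{B_R(o)\setminus B_1}(f')^2\,d\eta$ --- would in effect require an inequality of the form $\int_{B_R(o)\setminus B_1}(f')^2\,d\eta \leq \frac{n-1}{S}\int_{B_R(o)\setminus B_1}(f^2(R)-f^2)\,d\eta$, which is circular since $S$ is the unknown being estimated and the ODE contains $\mu_{1,\phi}(B_R(o))$, not $S$. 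There is no evident way to close this, and the paper avoids it entirely by establishing (\ref{3-2}) at the outset. So the gap you flagged is real, and the fix is not a further rearrangement or integration by parts downstream, but a better upstream inequality exploiting $(f')^2 \leq f^2/t^2$.

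One smaller remark: your refined rearrangement bounds for $\int_\Omega f^2\,d\eta$ and $\int_\Omega f^2/t^2\,d\eta$ are correct, and the first is even sharper than the $\int_\Omega f^2\,d\eta \geq \int_{B_R(o)}f^2\,d\eta$ used in the paper. But the paper finds it cleaner to keep the crude bound (\ref{rigidity}) for $\int_\Omega f^2$, subtract the Rayleigh identity, and then rearrange the combined integrand $G$ rather than its two constituents separately; this packaging is what makes the annular terms in (\ref{3-1}) fall out without any residual.
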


\begin{proof}
 By Lemma
\ref{lemma2-3}, we have $f'-\frac{1}{t}f\leq 0$ and $f'\geq 0$ in
$[0,R]$, which implies
\begin{eqnarray*}
(f')^2 - \frac{f^2}{t^2} \leq 0.
\end{eqnarray*}
Since $|\overline{\nabla} \frac{x_i}{t}|^2=1-\frac{x_i^2}{t^2}$ and
$(f')^2 - \frac{f^2}{t^2} \leq 0$, with the help of trial functions
$\psi_{i}$, $i=1,2,\cdots,n$, constructed in Section \ref{S2} and by
using a similar argument of deriving the inequality (2.31) in
\cite{XW}, it is not hard to get
\begin{eqnarray} \label{3-2}
\frac{n-1}{\sum\limits_{i=1}^{n-1}\frac{1}{\mu_{i,\phi}}}
\int_\Omega f^2 d\eta \leq  \int_\Omega
\left[(f')^2+(n-1)\frac{f^2}{t^2}\right] d\eta.
\end{eqnarray}
Since $f$ is increasing, we can deduce  from \cite[Lemma 4.4 and
Appendix]{CM} by the rearrangement technique the following:
\begin{eqnarray} \label{rigidity}
\int_\Omega f^2 d\eta \geq \int_{B_{R}(o)} f^2 d\eta.
\end{eqnarray}
Putting the above expression into (\ref{3-2}) yields
\begin{eqnarray} \label{3-3}
\frac{n-1}{\sum_{i=1}^{n-1}\frac{1}{\mu_{i,\phi}}} \int_{B_{R}(o)}
f^2 d\eta \leq  \int_\Omega \left[(f')^2+(n-1)\frac{f^2}{t^2}\right]
d\eta.
\end{eqnarray}
Since $f(t)\frac{x_i}{t}$, $i=1,2,\cdots,n$, are the eigenfunctions
corresponding to the eigenvalue $\mu_{1,\phi}(B_{R}(o))$, one can
obtain from the characterization (\ref{chr-2}) that
\begin{eqnarray} \label{3-4}
\mu_{1,\phi}(B_{R}(o)) \int_{B_{R}(o)} f^2 d\eta =  \int_{B_{R}(o)}
\left[(f')^2+(n-1)\frac{f^2}{t^2}\right] d\eta.
\end{eqnarray}
Combining (\ref{3-3}) and  (\ref{3-4}) results in
\begin{eqnarray}  \label{3-5}
&&\left(\mu_{1,\phi}(B_{R}(o)) -
\frac{n-1}{\sum_{i=1}^{n-1}\frac{1}{\mu_{i,\phi}}}\right)\int_{B_{R}(o)}
f^2 d\eta \geq   \nonumber\\
&& \qquad\qquad  \int_{B_{R}(o)}
\left[(f')^2+(n-1)\frac{f^2}{t^2}\right]
d\eta-\int_{\Omega}\left[(f')^2+(n-1)\frac{f^2}{t^2}\right] d\eta.
\end{eqnarray}
On one hand,
\begin{eqnarray} \label{3-6}
\int_{\Omega} \left[(f')^2+(n-1)\frac{f^2}{t^2}\right] d\eta &=&\int_{(\Omega\setminus B_{R}(o)) \cup (\Omega \cap B_{R}(o))}\left[(f')^2+(n-1)\frac{f^2}{t^2}\right] d\eta \nonumber\\
&=&\int_{\Omega \setminus
B_{R}(o)}\left[(f')^2+(n-1)\frac{f^2}{t^2}\right] d\eta + \nonumber\\
 && \qquad \qquad \int_{\Omega\cap B_{R}(o)}\left[(f')^2+(n-1)\frac{f^2}{t^2}\right]
 d\eta.  \qquad \qquad
\end{eqnarray}
By \cite{CM}, it is not hard to show that
$(f')^2+(n-1)\frac{f^2}{t^2}$ is monotone decreasing along the
radial direction of $(\Omega\cap B_{R}(o))\setminus B_1$, which
implies
 \begin{eqnarray}  \label{add-11}
&& \int_{\Omega \cap B_{R}(o)}
\left[(f')^2+(n-1)\frac{f^2}{t^2}\right]
d\eta\nonumber\\
  &=& \int_{\Omega \cap B_{R}(o) \cap B_1}
\left[(f')^2+(n-1)\frac{f^2}{t^2}\right]d\eta+ \nonumber\\
 && \quad  \int_{(\Omega \cap B_{R}(o)) \setminus B_1} \left[(f')^2+(n-1)\frac{f^2}{t^2}\right]d\eta \nonumber\\
&\leq& \int_{\Omega \cap B_{R}(o) \cap B_1}
\left[(f')^2+(n-1)\frac{f^2}{t^2}\right]d\eta+
\nonumber\\
&& \quad
\left[(f')^2(R_1)+(n-1)\frac{f^2(R_1)}{R_1^2}\right]\int_{(\Omega
\cap B_{R}(o)) \setminus B_1}d\eta.  \qquad
 \end{eqnarray}
Similarly, one can obtain
\begin{eqnarray} \label{add-111}
\int_{B_1} \left[(f')^2+(n-1)\frac{f^2}{t^2}\right] d\eta&=&\int_{B_1 \cap \Omega\cap B_{R}(o)} \left[(f')^2+(n-1)\frac{f^2}{t^2}\right]d\eta+\nonumber\\
 && \qquad  \int_{B_1 \setminus (\Omega\cap B_{R}(o))}\left[(f')^2+(n-1)\frac{f^2}{t^2}\right]d\eta \nonumber\\
&\geq& \int_{ B_1 \cap \Omega\cap B_{R}(o)}
\left[(f')^2+(n-1)\frac{f^2}{t^2}\right]d\eta+\nonumber\\
 && \qquad \left[(f'(R_1))^2+(n-1)\frac{f^2(R_1)}{R_1^2}\right]\int_{B_1
\setminus (\Omega\cap B_{R}(o))} d\eta,  \qquad
\end{eqnarray}
where $R_{1}$ is the radius of the ball $B_1$. One has from the
assumption $|\Omega\cap B_{R}(o)|_\phi=|B_1|_\phi$ that
 \begin{eqnarray}  \label{3-8}
\int_{\Omega \cap B_{R}(o)}\left[(f')^2+(n-1)\frac{f^2}{t^2}\right]
d\eta\leq\int_{B_1}\left[(f')^2+(n-1)\frac{f^2}{t^2}\right]d\eta.
 \end{eqnarray}
Since $f(t)=T(R)$ is constant when $t>R$, by a direct calculation
one has
 \begin{eqnarray} \label{add-l111}
&&\int_{\Omega\setminus
B_{R}(o)}\left[(f')^2+(n-1)\frac{f^2}{t^2}\right]
d\eta\nonumber\\
 &=&
\int_{(\Omega\setminus B_{R}(o)) \cap (B_2\setminus B_{R}(o))} \left[(f')^2+(n-1)\frac{f^2}{t^2}\right]d\eta+\nonumber\\
 &&\qquad \int_{(\Omega \setminus B_{R}(o)) \setminus (B_2\setminus B_{R}(o))} \left[(f')^2+(n-1)\frac{f^2}{t^2}\right]d\eta\nonumber\\
&=&\int_{(\Omega\setminus B_{R}(o)) \cap (B_2\setminus B_{R}(o))}
\left[(f')^2+(n-1)\frac{f^2}{t^2}\right]d\eta+\nonumber\\
 && \quad \left[(f'(R))^2+(n-1)\frac{f^2(R)}{R^2}\right]\int_{(\Omega
\setminus B_{R}(o)) \setminus (B_2\setminus B_{R}(o))} d\eta.
 \end{eqnarray}
Similarly, one can get
 \begin{eqnarray} \label{add-ll111}
&& \int_{B_2\setminus
B_{R}(o)}\left[(f')^2+(n-1)\frac{f^2}{t^2}\right]d\eta
\nonumber\\
&=&\int_{(B_2\setminus B_{R}(o)) \cap (\Omega\cap
B_{R}(o))}
 \left[(f')^2+(n-1)\frac{f^2}{t^2}\right]d\eta+ \nonumber\\
  && \qquad \int_{(B_2 \setminus B_{R}(o) )\setminus(\Omega\setminus B_{R}(o))}\left[(f')^2+(n-1)\frac{f^2}{t^2}\right]d\eta \nonumber\\
&=& \int_{(B_2\setminus B_{R}(o)) \cap (\Omega\cap B_{R}(o))}
\left[(f')^2+(n-1)\frac{f^2}{t^2}\right]d\eta+ \nonumber\\
 &&\qquad \left[(f'(R))^2+(n-1)\frac{f^2(R)}{R^2}\right]\int_{(B_2
\setminus B_{R}(o))\setminus(\Omega\setminus B_{R}(o))}d\eta.
 \end{eqnarray}
One has from the assumption $|\Omega\setminus
B_{R}(o)|_\phi=|B_2\setminus B_{R}(o)|_\phi$ that
 \begin{eqnarray} \label{3-13}
\int_{\Omega \setminus
B_{R}(o)}\left[(f')^2+(n-1)\frac{f^2}{t^2}\right] d\eta =
\int_{B_2\setminus B_{R}(o)}
\left[(f')^2+(n-1)\frac{f^2}{t^2}\right]d\eta.
 \end{eqnarray}
 Putting (\ref{3-6})-(\ref{3-13}) into (\ref{3-5}) yields
\begin{eqnarray*}
&&\left(\mu_{1,\phi}(B_{R}(o)) -
\frac{n-1}{\sum_{i=1}^{n-1}\frac{1}{\mu_{i,\phi}}}
\right)\int_{B_{R}(o)} f^2 d\eta \geq   \int_{B_{R}(o)\setminus B_1}
\left[(f')^2+(n-1)\frac{f^2}{t^2}\right] d\eta- \\
&& \qquad \qquad \qquad\qquad\qquad\qquad\qquad \qquad\qquad\qquad
\int_{B_2\setminus B_{R}(o)}
\left[(f')^2+(n-1)\frac{f^2}{t^2}\right] d\eta,
\end{eqnarray*}
which implies (\ref{3-1}) directly by using (\ref{f-add}) first and
then multiplying both sides of the above inequality by
$\left(\int_{B_{R}(o)} f^2 d\eta\right)^{-1}$. The equality case of
the estimate (\ref{3-1}) would follow by (\ref{rigidity}) where the
equality can be attained if and only if $\Omega$ is the ball
$B_{R}(o)$.
\end{proof}

\begin{remark}
\rm{ The estimate (\ref{3-1}) is sharper than (\ref{II-1}) in
Theorem \ref{theo-1}, since the quantity in the RHS of (\ref{3-1})
is nonnegative. This is because
 \begin{eqnarray*}
&&\int_{B_{R}(o)\setminus B_1} \left[(f')^2+(n-1)\frac{f^2}{t^2}\right] d\eta-f^2(R)\int_{B_2\setminus B_{R}(o)}(n-1)\frac{1}{t^2} d\eta\\
&&\qquad \geq\left[(f'(R))^2+(n-1)\frac{f^2(R)}{R^2}\right]\int_{B_{R}(o)\setminus B_1} d\eta-\frac{(n-1)f^2(R)}{R^2}\int_{B_2\setminus B_{R}(o)} d\eta\\
&&\qquad =(n-1)\frac{f^2(R)}{R^2}\left(\int_{B_{R}(o)\setminus B_1} d\eta-\int_{B_2\setminus B_{R}(o)}d\eta\right)\\
&&\qquad = (n-1)\frac{f^2(R)}{R^2}(|B_{R}(o)\setminus B_1|_\phi - |B_2\setminus B_{R}(o)|_\phi)\\
&&\qquad = (n-1)\frac{f^2(R)}{R^2}(|B_{R}(o)|_\phi - |B_1|_\phi - (|B_2|_\phi - |B_{R}(o)|_\phi))\\
&&\qquad = (n-1)\frac{f^2(R)}{R^2}(2|B_{R}(o)|_\phi - (|B_1|_\phi + |B_2|_\phi))\\
&&\qquad = (n-1)\frac{f^2(R)}{R^2}(2|\Omega|_\phi - (|\Omega \cap B_{R}(o)|_\phi + |\Omega\setminus B_{R}(o)|_\phi + |B_{R}(o)|_\phi))\\
&&\qquad =0.
\end{eqnarray*}
}
\end{remark}

\begin{remark}
\rm{ It is not hard to know that using a similar argument to the one
used in Section \ref{S3}, a shaper estimate (for the sum of the
reciprocals of the first $(n-1)$ nonzero Neumann eigenvalues of the
Witten-Laplacian on bounded domains in $\mathbb{H}^n$) than
(\ref{II-1-1}) can be obtained, and we wish to leave this as an
exercise for readers who have interest in this topic. }
\end{remark}

\section*{Acknowledgments}
\renewcommand{\thesection}{\arabic{section}}
\renewcommand{\theequation}{\thesection.\arabic{equation}}
\setcounter{equation}{0} \setcounter{maintheorem}{0}

This research was supported in part by the NSF of China (Grant Nos.
11801496 and 11926352), the Fok Ying-Tung Education Foundation
(China), and Hubei Key Laboratory of Applied Mathematics (Hubei
University). After we put the first version of our manuscript on
arXiv $12^{\mathrm{th}}$ March 2024, Prof. Kui Wang at Soochow
University sent us a digital copy of the reference \cite{GW} and
told us that for the Euclidean $n$-space $\mathbb{R}^n$ with
Gaussian density (i.e. $e^{-\phi}=(2\pi)^{-n/2}e^{-t^{2}/2}$), under
an extra upper bound constraint for the Gaussian density they could
obtain the spectral isoperimetric inequality (\ref{II-1}) in Theorem
\ref{theo-1} here. We would like to thank Prof. Wang for sharing
their preprint with us and for taking an interest in our manuscript.

\section*{Conflict of interest}

The authors declare that there are no conflicts of interests
regarding the publication of this paper.

\section*{Data availability statement}

Data sharing is not applicable to this article as no new data were
created or analyzed in this study.

\end{document}